\numberwithin{equation}{section}
\newtheorem{theorem}{Theorem}
\newtheorem{lemma}{Lemma}
\newtheorem{remark}{Remark}
\newtheorem{definition}{Definition}
\numberwithin{theorem}{section}
\numberwithin{corollary}{section}
\numberwithin{lemma}{section}
\numberwithin{definition}{section}
\numberwithin{proposition}{section}
\numberwithin{remark}{section}
\numberwithin{example}{section}
\def\divi{\hbox{\rm div\,}}
\def\R{\mathbb R}
\def\N{\mathbb N}
\def\sign{{\rm sign}\,}
\def\qed{{\quad\rule{2mm}{2mm}\medskip}}
\begin{document}

\title[Singular elliptic  equations]{Singular elliptic  equations
having a gradient term with natural growth}

\begin{abstract}
We study a class of Dirichlet boundary value problems whose prototype is 
\begin{equation}\label{1.2abs}
 \left\{\begin{array}{ll}
 -\Delta_p u =h(u)|\nabla u|^p+u^{q-1}+f(x)\, &\quad\hbox{in } \ \Omega\,,\\
u\ge 0\,,&{\quad\hbox{in } \ \Omega}\\
u = 0\,&\quad\hbox{on }\partial \Omega\,,\end{array}\right.
\end{equation}
where   $\Omega$ an open bounded subset of $\R^N$,  $0<q<1$, $1<p<N$, $h$ is a continuous function and $f$ belongs to a suitable Lebesgue space.  The main features of this problem are the presence of a singular term and a first order term with natural growth in the gradient.
A priori estimates and existence results are proved depending on the summability of the datum $f$.
\end{abstract}

\author[A. Ferone, A. Mercaldo, S. Segura de Le\'on]
{A. Ferone, A. Mercaldo and S. Segura de Le\'on}
\subjclass{35B25, 35J25}
\keywords{Existence, Singular elliptic equations, a priori estimates, gradient term with natural growth}

\address{Adele Ferone
\hfill \break\indent Dipartimento di Matematica e Fisica,
\hfill\break\indent Universit\`a degli Studi della Campania  ``L.Vanvitelli"
\hfill\break\indent Viale Lincoln 5, 81100 Caserta, Italy.}
\email{{\tt adele.ferone@unicampania.it}}

\address{Anna Mercaldo
\hfill \break\indent Dipartimento di Matematica e Applicazioni
``R.Caccioppoli",
\hfill\break\indent Universit\`a degli Studi di Napoli
  Federico II,
\hfill\break\indent Complesso Monte S. Angelo, Via
Cintia, 80126  Napoli, Italy.} \email{{\tt mercaldo@unina.it}}

\address{Sergio Segura de Le\'on
\hfill \break\indent Departament d'An\`alisi Matem\`atica,
Universitat de Val\`encia, \hfill\break\indent Dr. Moliner 50,
46100 Burjassot, Val\`encia, Spain.} \email{{\tt
sergio.segura@uv.es}}

\maketitle

\section{Introduction}\label{s1}

In the present paper we study the  existence of a nonnegative solution $u$   for a nonlinear elliptic equation having both a zero order term which tends to infinity at $u=0$  and a first order term which has a natural growth in the gradient of $u$.  More precisely, this paper concerns with problems of the type
\begin{equation}\label{p_intro}
 \left\{\begin{array}{ll}
 -\divi(\pmb a(x, u, \nabla u)) =b(x ,u, \nabla u)+g(x,u)+f(x)\,, &\quad\hbox{in } \ \Omega\\[3mm]
u = 0\,,&\quad\hbox{on }\partial \Omega\,,\end{array}\right.
\end{equation}
Here $\Omega$ is an open bounded subset of $\R^N$, $N\ge 2$, 
$-\divi(\pmb a(x, u, \nabla u))$ is a Leray-Lions operator defined on $W^{1,p}_0(\Omega)$, $b(x ,u, \nabla u)$ is a nonlinear term which grows like $|\nabla u|^p$ and more precisely satisfies
$$
|b(x, s, z )|\le h(s) |z|^p\,,
$$
for a continuous function $h:\R\rightarrow \R$. Moreover,  $g(x,u)$ is a singular term  at $u=0$, that is  
$$
0\le g(x, s)\le   \Lambda s^{q-1}\,, \quad \Lambda >0\,, \quad 0<q<1 \,.
$$
Finally the datum $f$ belongs to a suitable Lebesgue space. 

The existence of a weak solution to problem \eqref{p_intro} when the singular term   $g(x,u) $ does not appear has been investigated by many authors  starting by the 80s.
 In the papers \cite{BMP2, BMP4} the existence of bounded solutions is proved when the problem \eqref{p_intro} has   also a (non singular) zero-order absorption term, while in papers \cite{BMP1} a more general class of equations, which have a gradient term satisfying suitable sign conditions, is considered and existence of unbounded solutions has been proved.

Existence and nonexistence of unbounded solutions to problem \eqref{p_intro} having a reaction gradient terms with natural quadratic growth  satisfying a further suitable condition have been faced in \cite{FM1, BST, Segura}  by using test functions that simulate the Cole–Hopf transformation. The more general case, where  natural growth different of the quadratic one, is treated in  \cite{FM2, PS}.
 In \cite{PS} optimal conditions on the growth of $h$ at infinity to ensure that, given $f$ with a certain summability, problem \eqref{p_intro} admits a solution are given. Similar results in this order of ideas can be found in \cite{Porretta}.

The existence of nonnegative solutions for semilinear  second order partial differential problems singular at $u=0$, without first order term, is also a classical problem which has been considered by several authors since 70's. 
In \cite{CRT}, it is shown the existence and uniqueness of a nonnegative solution in a case where the equation is not written in a divergence form and the solution is a classical solution, i.e. it is in $C^2(\Omega)\cap C (\overline\Omega)$ which is strictly positive in $\Omega$. In \cite{CP} it is considered the case $g (x, s) = 1/s^\gamma + (\lambda s)^\alpha$ with $\gamma , \lambda, \alpha >
0$ and existence and nonexistence results for classical solutions are proved (see also \cite{Stuart}). 
Looking for a nonnegative solution in a Sobolev space, the problem has been considered in \cite{BO}, where it is studied the case $g(x, u)=f (x)/s^\gamma$, with $\gamma>0$, $f\ge 0$ not identically zero and belonging to a suitable Lebesgue space. In this paper existence, uniqueness and regularity of
a distributional solution, strictly positive in $\Omega$ is proved.  The proofs of these results are manly based on the fact that $g(x, s)$ is a nonincreasing function in the
variable $s$ and on the use of strong maximum principle. Uniqueness and comparison results for this type of solution has been
proved in \cite{BCD} and, by using symmetrization techniques, in \cite{BCT}. In order to avoid the use of strong maximum principle and monotonicity assumption on $g(x, \cdot)$, a new definition of nonnegative
solutions have been provided and existence, stability and uniqueness results for these notions are proved   in  \cite{ AGM1, AGM2, AGM3}. 

\noindent Further contributions to semilinear elliptic equations having this type of singularity are contained for example
in \cite{AB, AMM, B, BCroce, CD, CGS, CES, OlP, OrP}.

A very few results are known about existence of solutions which changes its sign and a first paper in this direction is  \cite{CDM}, where the authors show that if the “singular term” goes to infinity at zero faster than $1/|u|$ then only nonnegative solutions are possible, while in the other case nonpositive solution or even solutions changing the sign are possible. In \cite{FMS1} the solution is defined as a minimum point for a suitable functional and the definition of solution given by the authors uses test functions which vanish at $u = 0$ and thus the equation is satisfied in $\Omega \setminus \{u = 0\}$. It is also proved the uniqueness for nonnegative solutions when g(x, .) is decreasing.

The effects of the presence of two singularities, both in a gradient term having natural growth and in a zero order term, has been addressed in \cite{O}.  In this paper the function $h$ is summable on $\R$ and the singular zero order term involves the datum: $g(x,u)=f(x)/u^\gamma$. These assumptions allow the study of the equation with $L^1$--data.

The novelty of  this paper consists in analyzing the effects of a gradient term having natural growth and a singular term of the type $g(x, u)=1/u^\gamma$. Following \cite{PS}, the existence of  nonnegative solutions to problem \eqref{p_intro} is proven depending on  the  behaviour of $h$ at $+\infty$  (we point out that we do not assume $h\in L^1(\R)$) and the summability of the datum $f$. 

Our approach is based on  a priori estimates for weak solutions to a sequence of approximating problems. The  proof of these a priori estimates is obtained by adapting the classical approach to study these type of equations made by a change of unknown of Cole--Hopf type given by Lemma \ref{canc}. These a priori estimates allow to deduce the existence of a limit function $u$ such that the approximate solutions $u_n$ and their gradients $\nabla u_n$ converge to $u$ and  $\nabla u$ respectively. A procedure of passage to the limit permits to prove that such a function $u$ is, indeed, a solution of problem \eqref{p_intro}. 
The main difficulties in proving firstly the a priori estimates and then in passing to limit in  the approximating problems are due to the presence of the singular term $g(x, u)$ and the necessity to prove that it can really be bounded

\noindent In this paper, we consider three different types of summability for a datum $f\in L^m(\Omega)$:  (a) $m> N/p$, (b) $m= N/p$ and (c) $\frac{Np}{Np-N+p}\le m<\frac Np$, which will be analyzed separately into three existence results, one for each. Our main results are given by Theorem \ref{main}, Theorem \ref{mainlimit} and Theorem \ref{main_unbounded} and proven in Section 3, Section 4 and Section 5, respectively.

\section{Notation, assumptions and preliminary results}\label{assumptions}

Throughout this paper, $\Omega$ stands for an open bounded set of $\R^N$, with $N\ge 2$. The Lebesgue measure of $E\subset \Omega$ will be denoted by $|E|$.

\noindent On the other hand, the positive and negative part of a function $u$ is denoted by $u_+$ and $u_-$, respectively. Moreover, we denote
$$
\{|u|\ge \delta \}=\{x\in \Omega: \>|u(x)|\ge \delta \}\,,
$$
for any $\delta >0$.

\noindent In what follows, we will also consider two auxiliary functions.
For any $s\in \R$ and any $k>0$ we define
\begin{equation}\label{gk}
 G_k(s)=(|s|-k)_+\sign(s) \,,
\end{equation}
\begin{equation}\label{tk}
T_k(s) = \max\{-k, \min\{s, k\}\} \,.
\end{equation}

\noindent From now on, we will denote by $C$ a positive constant that only depends on the data, not  on $n$ and that  may change from line to line.

\subsection{Assumptions}

The aim of this subsection is to give the hypotheses on the data of problem \eqref{p_intro} which we make in the whole paper.
We also introduce the notion of weak solution which we use.

As pointed out in the Introduction we study solutions to the following singular nonlinear problem
 \begin{equation}\label{P}
 \left\{\begin{array}{ll}
 -\divi(\pmb a(x, u,\nabla u) )=b(x ,u, \nabla u)+g(x,u)+f(x)\,, &\quad\hbox{in } \ \Omega\\[3mm]
u\ge 0\,,&\quad\hbox{in } \ \Omega\\[3mm]
u = 0\,,&\quad\hbox{on }\partial \Omega\,.\end{array}\right.
\end{equation}
 We assume that, for some $1<p<N$,
 \begin{align*}
 &\pmb{a}\>:\>\Omega\times\mathbb{R} \times\mathbb{R}^{N}\rightarrow \mathbb{R}^{N}\\
 &b :\Omega\times\mathbb{R} \times\mathbb{R}^{N}\rightarrow \mathbb{R}\\
&g\>:\>\Omega\times (\R\setminus \{0\}) \longrightarrow [0,+\infty)
\end{align*}
are   Carath\'{e}odory functions which satisfy
the growth conditions
\begin{equation}\label{crescitaa}
|\pmb{a}(x, s, z )|\le a_0 |z|^{p-1}+a_1|s|^{p-1}+a_2  \, ,\quad a_0 , a_1, a_2 >0\,,
\end{equation}
\begin{equation}\label{crescitab}
|b(x, s, z )|\le h(s) |z|^p\,,
\end{equation}
where  $h:\R\rightarrow \R$ is a continuous function, and
\begin{equation}\label{g_pos}
 0\le g(x, s)\le   \Lambda s^{q-1}\,, \quad \Lambda >0\,, \quad 0<q<1 \,.
\end{equation}
Moreover the function $\pmb{a}$ satisfies the ellipticity condition
\begin{equation}
\pmb{a}\left( x,  s, z \right) \cdot z \geq  \lambda\left\vert z\right\vert ^{p} \,, \qquad \lambda >0 \,,
 \label{ell}
\end{equation}
and the monotonicity condition
\begin{equation}
\left( \pmb{a}\left( x,  s, z \right) -\mathbf{a}\left( x, s, z' \right)
\right) \cdot \left( z -z' \right) >0\,,
\label{monotonia}
\end{equation}
 These hypotheses hold
 for every $z, z'\in \R^N$, with $z \neq z'$, for every $s\in \R$ and for almost every $x\in\Omega$.

\noindent Finally we assume
\begin{equation}\label{ipf_pos}
f\ge 0\, \quad \hbox{and} \quad f\in L^m(\Omega)\,,
\end{equation}\medskip
where $m$ will be specified later.
\begin{remark}\rm
It is worth remarking that no singularity occurs in the product $g(x,s)s^{1-q}$ since
\begin{equation}
  g(x,s)\,s^{1-q}\le \Lambda\,.
\end{equation}
Obviously the product $g(x,s)s$ has also no singularities.
\end{remark}
Now we give the definition of weak solution to problem \eqref{P}  whose existence is proved in Sections 3 - 5.
\begin{definition}\label{defsol}
A function $u\in W_0^{1,p}(\Omega)$ is a {\bf weak solution} to \eqref{P} if
\begin{equation}\label{4.00}
\frac{|\nabla u|^p}{|u|^{q}}\in L^1(\Omega)\,,
\end{equation}
\begin{equation}\label{b}
b(x, u, \nabla u)\in L^1(\Omega)\,,
\end{equation}
\begin{equation}\label{4.01_pos}
\int_\Omega g(x,u)v\,dx<+\infty\,, \qquad  \forall v\in W^{1,p}_0(\Omega)\cap L^\infty(\Omega) \,,
\end{equation}
and
\begin{multline}\label{4.0}
  \int_\Omega \pmb a(x, u,\nabla u)\cdot\nabla\varphi\, dx
    =
    \int_\Omega b(x,u, \nabla u) \varphi\, dx
    +\int_\Omega g(x,u) \varphi+\int_\Omega f \varphi\,.
\end{multline}
for every  $\varphi\in W^{1,p}(\Omega)\cap L^\infty(\Omega)$.
\end{definition}

\subsection{Auxiliary functions}
In this section we recall some well-known facts which are tools to address the study of quasi-linear elliptic equations with natural growth in the gradient.

\noindent Denote
\begin{equation}\label{H}
H(s)=\frac 1\lambda\int_0^sh(\sigma)\, d\sigma\,,
\end{equation}
\begin{equation}\label{fi}
\Phi(s)= \int_0^s e^\frac{|H(\sigma)|}{p-1}\, d\sigma\,.
\end{equation}
These auxiliary functions are used throughout the whole paper. A simple remark is in order: every function $u$ satisfies
\begin{equation}\label{fiu}
|\Phi(u)|=\left|\int_0^ue^\frac{|H(\sigma)|}{p-1}\, d\sigma\right|\ge |u|
\end{equation}
\begin{equation}\label{fisuu}
|\Phi(u)|\le |u| e^{\frac{|H(u)|}{p-1}}
\end{equation}
and
\begin{equation}\label{figrad}
|\nabla\Phi(u)|=e^\frac{|H(u)|}{p-1}|\nabla u|\ge |\nabla u|\,.
\end{equation}
Therefore, if $\Phi(u)\in L^r(\Omega)$  or $\Phi(u)\in W_0^{1,r}(\Omega)$  for some $r\ge 1$,  then $u\in L^r(\Omega)$ or $u\in W_0^{1,r}(\Omega)$ respectively.

\subsection{Approximating problems}

We are concerned with proving that problem \eqref{P}  has at least a weak solution. We will prove this result by approximations. To do so, we consider the following problems
\begin{equation}\label{pd1}
 \left\{\begin{array}{ll}
 -{\rm div}(\pmb a(x,u_n,\nabla  u_n)) =b(x, u_n, \nabla u_n)+ g_n (x, u_n )+f_n(x)\,, &\hbox{in } \Omega\\[3mm]
 u_n\ge 0\,,&\hbox{in }  \Omega\\[3mm]
u_n = 0&\hbox{on }\partial \Omega\end{array}\right.
\end{equation}
where $g_n(x,s)=T_n(g(x,s))$ and $f_n=T_n(f)$.
For any fixed $n$, problem \eqref{pd1} exhibits at least a weak solution $u_n\in W^{1,p}_0(\Omega)\cap L^\infty(\Omega)$ as a consequence of the results of \cite{PS}, it is enough to take $b_0(x)=n$ in \cite[Theorem 1.1]{PS}.

Actually, we may straightly consider the datum $f$, without truncations, since by assumptions on its summability, the datum $f$ is always an element of the dual space of the Sobolev space $W^{1}_0(\Omega)$, $W^{-1,p'}(\Omega)$. Starting from the truncated data allows us to easily use a cancellation lemma. It is a consequence of a kind of change of unknown obtained multiplying the equation by a suitable exponential function of $u_n$ (see, for example, \cite{PS}). Since $u_n\in  L^\infty(\Omega)$, there is no need to worry about whether these exponential test functions can really be chosen.

\begin{lemma} \label{canc}(Cancellation lemma)
Let  $u_n  \in W^{1, p}_0 ( \Omega) \cap L^{\infty} ( \Omega ) $  be a weak solution to problem
(\ref{pd1}).
\begin{item}
\item (1) If  $v \in W^{1, p}_0 ( \Omega) $, then
\begin{multline*}
\int_{\Omega} e^{\hbox{sign\,}(v) H(u_n)} \pmb a (x, u_n,\nabla u_n) \cdot \nabla v\, dx\\
\le \int_{\Omega} e^{\hbox{sign\,}(v) H(u_n)} v g(x,u_n)  \, dx+ \int_{\Omega} e^{\hbox{sign\,}(v) H(u_n)} v f  \, dx.
\end{multline*}
\item (2) If  $\Psi$ is a locally Lipschitz continuous and nondecreasing
function such that $\Psi (0) = 0$, then
\[ \lambda\int_{\Omega} e^{|H (u_n)|} \Psi^{\prime}
(u_n)  |\nabla u_n|^pdx \le \int_{\Omega} e^{|H (u_n)|} \Psi (u_n) g(x,u_n)\,dx+ \int_{\Omega} e^{|H (u_n)|} \Psi (u_n)f\,dx\, . \]
\end{item}
\end{lemma}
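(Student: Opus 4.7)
The plan is to use the classical Cole--Hopf change of unknown: I would plug into the weak formulation of the approximating problem \eqref{pd1} an exponential test function tailor-made so that the term produced by the natural growth of $b$ and the term produced by the ellipticity of $\pmb{a}$ cancel each other exactly. The choice $H(s) = \lambda^{-1} \int_0^s h(\sigma)\, d\sigma$ is precisely what makes this cancellation work, and the $L^\infty$--bound on $u_n$ coming from the approximation scheme makes sure that the exponential weights $e^{\pm H(u_n)}$ are bounded, so every test function used below is admissible in $W^{1,p}_0(\Omega)\cap L^\infty(\Omega)$.

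For part~(1), I would argue first for $v\ge 0$ and take $\varphi = e^{H(u_n)}v$. The chain rule gives
\[ \nabla \varphi = e^{H(u_n)}\nabla v + \tfrac{h(u_n)}{\lambda}\,e^{H(u_n)}\,v\,\nabla u_n. \]
Plugging this in, using \eqref{ell} on the $\int \pmb{a}\cdot\nabla u_n$ summand on the left and the natural growth \eqref{crescitab} to bound $b\,e^{H(u_n)}v\le h(u_n)|\nabla u_n|^p e^{H(u_n)}v$ on the right, the two $\int h(u_n)|\nabla u_n|^p e^{H(u_n)}v$ contributions cancel. What survives is the desired inequality with $g_n(x,u_n)$ and $f_n$ on the right, and these are upgraded to $g(x,u_n)$ and $f$ using $g_n\le g$, $f_n\le f$ together with $v\ge 0$. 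The case $v\le 0$ is symmetric: testing with $-e^{-H(u_n)}v_-$ produces exactly the factor $e^{\sign(v)H(u_n)}$, and the general statement then follows by writing $v=v_+-v_-$.

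Part~(2) is structurally identical, just with $\varphi = e^{|H(u_n)|}\Psi(u_n)$, which is admissible because $\Psi(0)=0$, $\Psi$ is locally Lipschitz and $u_n\in L^\infty(\Omega)$. Since $u_n\ge 0$ and $h\ge 0$ one has $|H(u_n)|=H(u_n)$ and $\nabla|H(u_n)| = \tfrac{h(u_n)}{\lambda}\nabla u_n$. Splitting $\nabla\varphi$ into the $\Psi'(u_n)\nabla u_n$ part and the weight part, using ellipticity on each and the natural growth on $b$, the two $h(u_n)|\nabla u_n|^p e^{|H(u_n)|}\Psi(u_n)$ terms cancel, and one is left with
\[ \lambda\int_\Omega e^{|H(u_n)|}\Psi'(u_n)|\nabla u_n|^p\,dx \le \int_\Omega e^{|H(u_n)|}\Psi(u_n)\bigl(g_n(x,u_n)+f_n\bigr)\,dx, \]
which is upgraded to the statement via $g_n\le g$, $f_n\le f$ and $\Psi(u_n)\ge 0$.

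The only place I expect any real friction is checking admissibility of the exponential test functions, which rests entirely on the $L^\infty$-bound on $u_n$ given by the approximation scheme (a pointwise truncation argument on $v$ would take care of integrability of $b\,\varphi$ if needed). Once that is in hand, the whole argument reduces to the algebraic cancellation baked into the definition of $H$.
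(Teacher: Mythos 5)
Your strategy coincides with the paper's own (the paper does not write the proof out; it only appeals to the Cole--Hopf change of unknown of \cite{PS} and to $u_n\in L^\infty(\Omega)$), and the core computation is right: test with $e^{\pm H(u_n)}$ times the given function, use \eqref{ell} on the term produced by differentiating the exponential weight and \eqref{crescitab} on the $b$-term, and the two $h(u_n)|\nabla u_n|^p$ contributions cancel because $H=\frac1\lambda\int_0^{\cdot}h$. Your admissibility remark is also to the point: for an unbounded $v\in W^{1,p}_0(\Omega)$ the product $e^{H(u_n)}v$ is not directly an admissible test function (the piece $v\nabla u_n$ need only lie in $L^{\frac{pp^*}{p+p^*}}$), so one tests with $e^{H(u_n)}T_k(v)$ and lets $k\to\infty$, which is harmless since $g_n,f_n\in L^\infty(\Omega)$ and $e^{H(u_n)}\pmb a(x,u_n,\nabla u_n)\in L^{p'}(\Omega;\R^N)$.

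The one step that does not work as written is the final ``upgrade'' from $g_n,f_n$ to $g,f$ on the set where $v<0$: there you multiply $T_n(g(x,u_n))\le g(x,u_n)$ and $T_n(f)\le f$ by a negative factor, so the inequality reverses. What your computation (and the paper's) actually yields for a general $v\in W^{1,p}_0(\Omega)$ is
\begin{multline*}
\int_{\Omega} e^{\sign(v) H(u_n)} \pmb a (x, u_n,\nabla u_n) \cdot \nabla v\, dx\\
\le \int_{\Omega} e^{\sign(v) H(u_n)}\, v\, T_n\bigl(g(x,u_n)\bigr)\, dx+ \int_{\Omega} e^{\sign(v) H(u_n)}\, v\, f_n\, dx\,,
\end{multline*}
and the replacement $T_n(g)\mapsto g$, $f_n\mapsto f$ is legitimate only where $v\ge0$; for $v\le 0$ the right-hand side with $g$ need not even be finite. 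This is an imprecision of the lemma as stated rather than a defect of your method: in every application the paper keeps $T_n(g)$ and $f_n$ whenever the test function may be negative (the choice $v=-u_n^-$ in the nonnegativity check, and $v=u_n-u$, $v=T_k(u_n-T_h(u))$ in the convergence steps). So either prove and use the version with $g_n,f_n$, or restrict the upgrade to nonnegative $v$; do not claim it for general $v$ via $v=v_+-v_-$. In part (2) this issue disappears, since $u_n\ge0$, $\Psi$ nondecreasing and $\Psi(0)=0$ give $\Psi(u_n)\ge0$, so your argument there is complete.
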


We first apply this Lemma to check that the approximate solutions are nonnegative. To this end, we choose $v=-u_n^-$ in Lemma \ref{canc} (1) to get
\begin{multline*}
  \int_{\{u_n<0\}}e^{-H(u_n)}\, \pmb a(x,u_n,\nabla  u_n)\cdot \nabla u_n \\
  \le -\int_\Omega (e^{-H(u_n)}u_n^-)T_n(g (x, u_n ))-\int_\Omega (e^{-H(u_n)}u_n^-) f_n
\end{multline*}
By \eqref{ell}, since the right-hand side is nonpositive, we obtain
\[\lambda \int_{\{u_n<0\}}e^{-H(u_n)} |\nabla u_n|^p\le 0\]
from where $u_n\ge 0$ a.e. in $\Omega$ follows.

\section{Existence result for $m> N/p$}

The main result of this section concerns existence of nonnegative weak solutions to problem \eqref{P} when the datum $f$ is  an element of the Lebesgue space $L^m(\Omega)$, with $m> N/p$. It is given by the following theorem:
\begin{theorem}\label{main}
Assume \eqref{crescitaa}-\eqref{ipf_pos} with
$$
f\in L^m(\Omega)\,,\qquad m> \frac Np
$$
and
\begin{equation}\label{ipo0}
\lim_{s\to \pm \infty}\frac{e^{H(s)}}{(1+\Phi(s))^{p-1}}=0\,.
\end{equation}
Then problem \eqref{P}  has at least a weak solution $u$ such that  $\Phi(u)\in W_0^{1,p}(\Omega)\cap L^\infty(\Omega)$ and, consequently,  $u\in W_0^{1,p}(\Omega)\cap L^\infty(\Omega)$.
  \end{theorem}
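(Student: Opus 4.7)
The plan is to work with the approximating solutions $u_n\in W^{1,p}_0(\Omega)\cap L^\infty(\Omega)$ to \eqref{pd1}, which are already known to be nonnegative, and derive uniform a priori bounds by exploiting the cancellation Lemma \ref{canc}. Testing part (1) of that lemma with $v=G_k(\Phi(u_n))\in W^{1,p}_0(\Omega)$ (allowed since $u_n$ is bounded) and noting that on the support of $v$ one has $u_n\ge 0$, the chain rule combined with $\Phi'(s)=e^{H(s)/(p-1)}$ and the ellipticity condition \eqref{ell} yields the pointwise identity
\begin{equation*}
e^{H(u_n)}\,\pmb a(x,u_n,\nabla u_n)\cdot\nabla G_k(\Phi(u_n))\ge \lambda\,\chi_{\{\Phi(u_n)>k\}}\,|\nabla \Phi(u_n)|^p,
\end{equation*}
so that the lemma produces the crucial inequality
\begin{equation*}
\lambda\int_\Omega |\nabla G_k(\Phi(u_n))|^p\,dx\le \int_\Omega e^{H(u_n)}\,G_k(\Phi(u_n))\,\bigl(g(x,u_n)+f\bigr)\,dx.
\end{equation*}

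The next step is to obtain a uniform $L^\infty$--bound for $\Phi(u_n)$ by Stampacchia iteration. On $A_k:=\{\Phi(u_n)>k\}$ we have $u_n\ge \Phi^{-1}(k)>0$, so the singular term satisfies $g(x,u_n)\le \Lambda(\Phi^{-1}(k))^{q-1}$, a bounded constant. Moreover, by the decay hypothesis \eqref{ipo0}, for every $\eta>0$ we can choose $k$ large so that $e^{H(u_n)}\le \eta\,(1+\Phi(u_n))^{p-1}$ on $A_k$, and in particular $e^{H(u_n)}G_k(\Phi(u_n))$ is controlled by a multiple of $G_k(\Phi(u_n))^p+$ lower order terms. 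Combining this with Hölder's inequality (using $f\in L^m$ with $m>N/p$), the Sobolev embedding $W^{1,p}_0\hookrightarrow L^{p^*}$, and the elementary fact that $|A_k|$ shrinks as $k$ grows, one arrives at an inequality of the Stampacchia type
\begin{equation*}
\int_{A_k}|\nabla G_k(\Phi(u_n))|^p\,dx\le C\,\|f+1\|_m\,|A_k|^{\alpha}\,\Bigl(\int_\Omega |G_k(\Phi(u_n))|^{p^*}dx\Bigr)^{1/p^*}
\end{equation*}
for some $\alpha>0$ coming from $m>N/p$. The standard iteration then gives $\|\Phi(u_n)\|_{L^\infty}\le M$ uniformly in $n$; testing once more with $v=\Phi(u_n)$ (now a legitimate bounded test function) and using ellipticity furnishes the uniform $W^{1,p}_0$--estimate for $\Phi(u_n)$, which by \eqref{fiu}--\eqref{figrad} transfers to $u_n$.

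With these estimates, standard compactness extracts a subsequence with $u_n\rightharpoonup u$ in $W^{1,p}_0(\Omega)$, $u_n\to u$ a.e., and $u\in L^\infty(\Omega)$. I would then establish a.e.\ convergence of the gradients via the Boccardo--Murat--Puel strategy adapted to natural growth: testing the difference of equations with $T_\delta(u_n-u)\,e^{H(u_n)}$ or a similar exponentially weighted truncation, one shows $\nabla u_n\to\nabla u$ a.e.\ in $\Omega$, whence $|\nabla u_n|^p\to |\nabla u|^p$ and $b(x,u_n,\nabla u_n)\to b(x,u,\nabla u)$ a.e. A generalized Lebesgue argument (using the estimate from Lemma \ref{canc}(2)) provides the equi--integrability needed to pass to the limit in the gradient term and in the diffusion term.

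The main obstacle is the passage to the limit in the singular contribution $\int g(x,u_n)\varphi\,dx$ for $\varphi\in W^{1,p}_0\cap L^\infty$. The difficulty is twofold: $g(x,\cdot)$ blows up at $0$, so we need a uniform lower bound for $u_n$ on the support of $\varphi$, and we must justify $g(x,u_n)\varphi\to g(x,u)\varphi$ in $L^1$. I would combine the structure $g(x,s)s^{1-q}\le \Lambda$ with the uniform estimate $\int_\Omega |\nabla u_n|^p/u_n^q\,dx\le C$ coming from Lemma \ref{canc}(2) applied with $\Psi(s)=s^{1-q}$; the resulting control $\int g(x,u_n)u_n\,dx\le C$ together with local nondegeneracy of $u$ (obtained by comparison with the solution of an auxiliary problem without gradient term) allows dominated convergence. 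Once this is done, verifying the weak formulation \eqref{4.0} and the integrability conditions \eqref{4.00}--\eqref{4.01_pos} follows from the previously established estimates.
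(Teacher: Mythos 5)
Your a priori estimates and the strategy for the strong convergence of the gradients are essentially the ones used in the paper (testing the cancellation lemma with $G_k(\Phi(u_n))$, exploiting \eqref{ipo0} through the decay of $\sup_{\{\Phi(s)>k\}}e^{H(s)}(1+|\Phi(s)|)^{1-p}$, Stampacchia iteration, then an exponentially weighted Boccardo--Murat--Puel argument). The genuine gap is in your treatment of the singular term. Your plan rests on a ``local nondegeneracy of $u$, obtained by comparison with the solution of an auxiliary problem without gradient term'', i.e.\ on a uniform positive lower bound for $u_n$ (or $u$) on the support of the test function, in order to pass to the limit in $\int_\Omega g(x,u_n)\varphi$ by dominated convergence. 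No such lower bound is available under the hypotheses: \eqref{g_pos} only bounds $g$ from above (it allows $g\equiv0$) and $f\ge0$ in \eqref{ipf_pos} may vanish on large sets, so there is no auxiliary problem whose solution is strictly positive to compare with, and the strong maximum principle route is precisely what this setting is designed to avoid. Consequently the claimed convergence $g(x,u_n)\varphi\to g(x,u)\varphi$ in $L^1(\Omega)$ for an arbitrary bounded $\varphi$ cannot be justified this way; the bound $\int_\Omega g(x,u_n)u_n\,dx\le C$ you invoke gives no control of $g(x,u_n)$ near the set $\{u=0\}$.

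The paper circumvents this entirely: it never proves $L^1$-convergence of the singular term, but establishes two one-sided inequalities and combines them. First, with a nonnegative test function $v$ and Fatou's lemma one gets $\int_\Omega \pmb a(x,u,\nabla u)\cdot\nabla v-\int_\Omega b(x,u,\nabla u)v-\int_\Omega fv\ \ge\ \int_\Omega g(x,u)v$, which in particular yields \eqref{4.01_pos}. Second, it tests \eqref{pd1} with $T_k(u_n)\varphi$, so that the singular contribution is tamed pointwise by $T_n(g(x,u_n))\,T_k(u_n)\le\Lambda k^{q}$, allowing dominated convergence with no lower bound on $u_n$; dividing by $k$, letting $k\to0$ and using Stampacchia's theorem (so that the set $\{u=0\}$, where $\nabla u=0$ a.e., is harmless) gives the reverse inequality, and equality \eqref{4.0} follows. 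A further technical point: you propose to apply Lemma \ref{canc}(2) with $\Psi(s)=s^{1-q}$ to get $\int_\Omega|\nabla u_n|^p/u_n^{q}\le C$, but $s\mapsto s^{1-q}$ is not locally Lipschitz at $s=0$ (its derivative blows up), so the lemma does not apply as stated; the paper instead takes $v=G_k(u_n^{1-q})$ in Lemma \ref{canc}(1), where the truncation keeps $u_n$ away from zero, and then removes the truncation via monotone convergence and \cite[Lemma 2.5]{GPS}. With these repairs --- in particular replacing the nondegeneracy argument by the $T_k(u_n)\varphi$ device --- your outline would match the paper's proof.
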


We prove that the sequence of {\sl approximate solutions} $\{u_n\}_n$ to problem \eqref{pd1} satisfies some a priori estimates. By these estimates we  deduce that $u_n$, up to subsequences, converges to a function $u$ which we prove is the sought  weak solution.

\medskip

\subsection{A priori estimates when $m>N/p$}\label{s3dex1}

In this subsection we  prove that the sequence of {\sl approximate solutions} $\{u_n\}_n$ satisfies a priori estimates in $L^\infty(\Omega)$ and in $W^{1,p}_0(\Omega)$. As pointed out, by these estimates we  deduce that $u_n$ converges,  up to subsequences,  to a function $u$ which is the sought solution.

\begin{lemma}\label{apriori}
({\sl Estimates in $L^\infty(\Omega)$ and $W^{1,p}_0(\Omega)$}).  For any fixed $n\in \N$, let $u_n\in W^{1,p}_0(\Omega)\cap L^\infty(\Omega)$ be a weak solution to problem \eqref{pd1}. Under the assumptions of Theorem \ref{main}, the following estimates hold true:
\begin{equation}\label{infty}
\|u_n\|_{L^\infty(\Omega)}\le C_1\,,
\end{equation}
\begin{equation}\label{h01}
\|\nabla u_n\|_{L^p(\Omega)}\le C_2\,,
\end{equation}
where $C_1$, $C_2$ are positive constants which only depend on $|\Omega|$, $N$, $m$, $p$, $\|f\|_{L^m}$, $\lambda$,
but do  not depend on $n$.
\end{lemma}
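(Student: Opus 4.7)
My plan is to obtain the $L^\infty$ bound first by a Stampacchia iteration performed on the Cole--Hopf transform $v_n:=\Phi(u_n)$, and then the $W^{1,p}_0$ estimate follows essentially for free. The auxiliary function $\Phi$ introduced in \eqref{fi} is designed precisely so that the exponential weights produced by Lemma \ref{canc} absorb the natural-growth gradient term into the leading elliptic part.

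\textbf{$L^\infty$ estimate.} I would apply Lemma \ref{canc}(2) with $\Psi(s)=G_k(\Phi(s))$: it is nondecreasing, locally Lipschitz and vanishes at $0$, with $\Psi'(s)=\chi_{\{\Phi(s)>k\}}\,e^{|H(s)|/(p-1)}$. Thanks to the identity $|\nabla\Phi(u_n)|=e^{|H(u_n)|/(p-1)}|\nabla u_n|$, the left-hand side collapses to $\lambda\int_\Omega|\nabla G_k(v_n)|^p\,dx$, hence the Sobolev embedding gives
\[
\lambda\,C_S^{-p}\,\|G_k(v_n)\|_{L^{p^*}}^p\le \int_\Omega e^{|H(u_n)|}\,G_k(v_n)\,\bigl(g_n(x,u_n)+f\bigr)\,dx.
\]
Two remarks control the right-hand side: on the super-level set $\{v_n>k\}$ one has $u_n\ge\Phi^{-1}(k)>0$, so $g_n(x,u_n)\le \Lambda u_n^{q-1}$ is uniformly bounded there for any fixed $k>0$; and by \eqref{ipo0}, for $k$ large enough the factor $e^{|H(u_n)|}$ on $\{v_n>k\}$ is dominated by an arbitrarily small multiple of $(1+v_n)^{p-1}$. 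Writing $v_n=G_k(v_n)+k$, absorbing the top-order piece into the left-hand side, and applying H\"older's inequality with $f\in L^m$, $m>N/p$, one arrives at
\[
\|G_k(v_n)\|_{L^{p^*}}^{p-1}\le C\,|\{v_n>k\}|^{\alpha},\qquad \alpha=1-\tfrac{1}{p^*}-\tfrac{1}{m}.
\]
A direct computation shows that $\alpha\,p^*/(p-1)>1$ is equivalent to $m>N/p$; this is exactly the threshold needed by Stampacchia's classical iteration lemma, which then forces $\|v_n\|_{L^\infty(\Omega)}\le C_1$. Finally, \eqref{fiu} transfers this bound to $u_n$ and yields \eqref{infty}.

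\textbf{$W^{1,p}_0$ estimate.} Once $\|u_n\|_\infty\le C_1$ is known, I would apply Lemma \ref{canc}(2) with $\Psi(s)=s$ to obtain
\[
\lambda\int_\Omega e^{|H(u_n)|}\,|\nabla u_n|^p\,dx\le \int_\Omega e^{|H(u_n)|}\,u_n\,\bigl(g_n(x,u_n)+f\bigr)\,dx.
\]
Continuity of $h$ together with the $L^\infty$ bound makes $e^{|H(u_n)|}$ uniformly bounded on $\Omega$, while $u_n\,g_n(x,u_n)\le\Lambda u_n^{q}$ is bounded and $\int_\Omega u_n f\le\|u_n\|_\infty\|f\|_{L^1}$. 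Since $e^{|H(u_n)|}\ge 1$ on the left, estimate \eqref{h01} follows.

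The main obstacle is the Stampacchia step: one has to carefully check that, after using \eqref{ipo0} to dominate $e^{|H(u_n)|}$ by a small multiple of $(1+v_n)^{p-1}$, the resulting superlinear term can indeed be reabsorbed into the Sobolev term, and that the H\"older exponents leave a power of $|\{v_n>k\}|$ strictly above the iteration threshold precisely when $m>N/p$. The singular zero-order term causes no serious difficulty because on $\{v_n>k\}$ the solution $u_n$ stays uniformly away from zero, so $u_n^{q-1}$ is controlled there.
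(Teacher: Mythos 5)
Your proposal is correct and takes essentially the same route as the paper: the test function $\Psi(s)=G_k(\Phi(s))$ in Lemma \ref{canc}(2), absorption of the weight $e^{H(u_n)}$ via \eqref{ipo0}, control of the singular term on $\{\Phi(u_n)>k\}$ through $u_n\ge\Phi^{-1}(k)$, a Stampacchia-type iteration (for which the paper defers to \cite[Proposition 3.1]{PS}), and then $\Psi(s)=s$ for the gradient bound. The only caveat is that the absorbed inequality actually carries a factor $\eta(k)k^{p-1}$ rather than a $k$-independent constant, so the iteration must be run as in \cite{PS}; you implicitly acknowledge this in your closing remark, and it does not affect the conclusion.
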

\begin{proof} Most of the proof follows that of \cite[Proposition 3.1]{PS}. We insert it to highlight that the presence of the singular term does not affect the result.

For any fixed $k>0$
 consider  $\Psi(s)= G_k(\Phi(s))$ in Lemma \ref{canc} (2) with $g (x, u_n)$ replaced by $T_n(g (x, u_n)) $. By \eqref{ell}, \eqref{g_pos} and H\"older's inequality, we obtain
\begin{align}\label{est}
 &\lambda\int_\Omega|\nabla G_k(\Phi(u_n))|^p\, dx\\[5mm]
  & \le\int_\Omega T_n(g (x, u_n)) \, e^{H(u_n)} \,|G_k(\Phi(u_n))|\, dx
      +\int_\Omega f_n\,e^{H(u_n)} \,|G_k(\Phi(u_n))|\, dx\notag\\[5mm]
     & \notag\le\Lambda \int_\Omega  u_n^{q-1}\, e^{H(u_n)} \,|G_k(\Phi(u_n))|\,dx
    + \|f\|_m\left (\int_\Omega e^{m'H(u_n)} \,|G_k(\Phi(u_n))|^{m'}\,dx\right)^\frac 1{m'}
\end{align}
Denote for any $k$,
\begin{equation}\label{def_eta}
\eta(k)=\sup_{\{ \Phi(s)>k \}} \frac{e^{H(s)} }{(1+|\Phi(s)|)^{p-1}}\,.
\end{equation}
Since $\displaystyle\lim_{s\to \pm\infty}\Phi(s)=\pm\infty$, it is easy to verify from \eqref{ipo0} that
\begin{equation}\label{lim_eta}
\lim_{k\to +\infty}\eta(k)=0\,.
\end{equation}
Moreover as in \cite{PS}, by \eqref{lim_eta}, we obtain
\begin{align}\label{em'}
e^{m'H(u_n)} &\le \frac{e^{m'H(u_n)} }
{(1+|\Phi(u_n)|)^{m'(p-1)}
} (1+k+ |G_k(\Phi(u_n))   |)^{m'(p-1)}\\
 \nonumber&
\le  C\eta(k)^{m'}(k^{m'(p-1)}+|G_k(\Phi(u_n))   |^{m'(p-1)} )
\end{align}
for all $k\ge1$.
In analogous way we get
\begin{equation}\label{e}
e^{H(u_n)} \le
  C\eta(k)(k^{p-1}+|G_k(\Phi(u_n))   |^{p-1} )
\end{equation}
for all $k\ge1$.
By \eqref{est}, we deduce
\begin{multline}\label{est1}
 \lambda\int_\Omega|\nabla G_k(\Phi(u_n))|^p\, dx\\
 \le C\eta(k) \int_\Omega \frac{|G_k(\Phi(u_n))|}{u_n^{1-q}}(k^{p-1}+|G_k(\Phi(u_n))   |^{p-1} )\,dx\\
 +C\eta(k) \|f\|_m
 \left (\int_\Omega |G_k(\Phi(u_n))|^{m'}
 (k^{m'(p-1)}+|G_k(\Phi(u_n))   |^{m'(p-1)} )
 \,dx\right)^\frac 1{m'}
\end{multline}
Moreover since $\Phi$ is an increasing function, we deduce
\begin{align}\label{est2}
 \lambda\int_\Omega|\nabla G_k(\Phi(u_n))|^p\, dx\
 &\le
\frac{ C\eta(k)k^{p-1}}{ [\Phi^{-1}(k)]^{1-q}  } \int_\Omega |G_k(\Phi(u_n))| \, dx
\\
\nonumber&\\
 \nonumber&
+  \frac{ C\eta(k)}{ [\Phi^{-1}(k)]^{1-q}  } \int_\Omega |G_k(\Phi(u_n))|^p \, dx
 \nonumber&\\
 \nonumber&
+    C\eta(k) k^{p-1}\|f\|_m\left ( \int_\Omega |G_k(\Phi(u_n))|^{m'} \, dx \right)^\frac1{m'}\\
 \nonumber&\\
 \nonumber&
+    C\eta(k) \|f\|_m\left ( \int_\Omega |G_k(\Phi(u_n))|^{pm'} \, dx \right)^\frac1{m'}
\end{align}
for all $k\ge1$. The monotonicity of $\Phi$ also implies that $\Phi^{-1}(k)\ge 1$ for $k$ larger enough, so that we get rid of this coefficient for $k$ larger than certain $k'$.

\noindent Denote $A_k=\{ \Phi(u)\ge k  \}$. 
Now  we estimate each term in \eqref{est2} by using H\"older's inequality. The following inequality holds true:
\begin{equation*}
\int_\Omega |G_k(\Phi(u_n))| \, dx=\int_{A_k} |G_k(\Phi(u_n))| \, dx\le |A_k|^{1-\frac1{p^*}}\left(\int_{A_k} |G_k(\Phi(u_n))|^{p^*} \, dx\right)^{\frac1{p^*}}.
\end{equation*}
Thanks to these estimates and the Sobolev embedding theorem, \eqref{est2} becomes

\begin{align}\label{est3}
 &\lambda S\left (\int_\Omega| G_k(\Phi(u_n))|^{p^*}\, dx\right)^\frac p{p^*}\\
 \nonumber&\\
 \nonumber&\le
 C\eta(k)k^{p-1} |A_k|^{\frac 1{m'}-\frac 1{p^*}}|\Omega|^{1-\frac 1{m'}} \left (\int_\Omega| G_k(\Phi(u_n))|^{p^*}\, dx\right)^\frac 1{p^*}
\\
\nonumber&\\
 \nonumber&
\qquad + C\eta(k)  |\Omega|^{1-\frac p{p^*}} \left (\int_\Omega| G_k(\Phi(u_n))|^{p^*}\, dx\right)^\frac p{p^*}
 \nonumber&\\
 \nonumber&
\qquad  +C\eta(k) k^{p-1}  |A_k|^{\frac 1{m'}-\frac 1{p^*}} \|f\|_m \left (\int_\Omega| G_k(\Phi(u_n))|^{p^*}\, dx\right)^\frac 1{p^*}\
 \nonumber&\\
\nonumber&
\qquad    +C\eta(k) \|f\|_m  |\Omega|^{\frac 1{m'}-\frac p{p^*}}   \left (\int_\Omega| G_k(\Phi(u_n))|^{p^*}\, dx\right)^\frac p{p^*}
\end{align}
for all $k> k'$.
Since $\eta(k)$ goes to 0 when $k$ tends to $+\infty$, it yields that the second and the fourth terms on the right hand side can be absorbed by the left hand side. Then there exists $k_0>0$ such that, for $k>k_0$, we obtain:
\begin{equation}\label{est4}
  \left (\int_\Omega| G_k(\Phi(u_n))|^{p^*}\, dx\right)^\frac {p-1}{p^*}
 \le
 C\eta(k)k^{p-1}  |A_k|^{\frac 1{m'}-\frac 1{p^*}}
 \end{equation}
This is the same inequality as in \cite[Proposition 3.1]{PS}. So, following its procedure, we deduce that $ \{\Phi(u_n)\}_n$, and consequently $\{u_n\}_n$, is bounded in $L^\infty(\Omega)$. Therefore, there exists a positive constant $C_1>0$ satisfying $\| u_n\|_\infty\le C_1$ for all $n\in\N$.

Now we prove the  a priori estimates in $W_0^{1,p}(\Omega)$ given by \eqref{h01}.

\noindent  Consider  $\Psi(s)=s$ in Lemma \ref{canc} (2)  with $g (x, u_n)$ replaced by $T_n(g (x, u_n)) $. By \eqref{ell}, \eqref{g_pos},  \eqref{infty} and H\"older's inequality, we obtain
\begin{align}\label{est5}
 \lambda\int_\Omega|\nabla u_n|^p\, dx    &\le\int_\Omega T_n(g (x, u_n)) \, e^{H(u_n)} \,u_n\, dx +\int_\Omega f_n\,e^{H(u_n)} \,u_n\, dx
\\
 \notag  &\\
\notag    &\le\Lambda\int_\Omega |u_n|^{q} \, e^{H(u_n)} \, dx +\int_\Omega f\,e^{H(u_n)} \,u_n\, dx
\end{align}
Thus,
$$
\int_\Omega|\nabla u_n|^p\, dx\le \frac1\lambda  \Big[ \Lambda C_1^qe^{H(C_1)} |\Omega|+ e^{H(C_1)} C_1\|f\|_{L^1}\Big]
$$
 and estimate \eqref{h01} is proven.
  \end{proof}

  \medskip

\subsection{Strong convergence of $\nabla u_n$} 
 In this subsection we prove that  the sequence of the {\sl approximate solutions} $\{u_n\}_n$ and their gradients converge to a function $u$ and its gradient $\nabla u$ respectively.  Moreover we also prove that the  different terms appearing in equation \eqref{pd1} converge.
 
 For any fixed $n\in \N$, let $u_n\in W^{1,p}_0(\Omega)\cap L^\infty(\Omega)$ be a weak solution to problem \eqref{pd1}.
 As a consequence of Lemma \ref{apriori} we deduce that there exists a nonnegative function $u\in W_0^{1,p}(\Omega)\cap L^\infty(\Omega)$ such that, up to subsequences,
 \begin{align}
\label{d3pos}     \nabla u_n\rightharpoonup \nabla u \,,&\qquad \hbox{weakly in }L^p(\Omega; \R^N)\,,\\
\label{d4pos}   u_n\rightarrow u\,,&\qquad\hbox{strongly in }L^r(\Omega)\hbox{ for }1\le r<p^*\,,\\
\label{d5pos}    u_n(x)\rightarrow u(x)\,,&\qquad\hbox{a.e. in }\Omega\,.
  \end{align}
Actually, the $L^\infty$--estimate \eqref{infty} implies that
\begin{equation}
  \label{d6pos}   u_n\rightarrow u\,,\qquad\hbox{strongly in }L^r(\Omega)\hbox{ for }1\le r<+\infty\,.
\end{equation}

\begin{lemma}\label{strong convergence} 
({\sl Strong convergence of $\nabla u_n$} ).    Under the assumptions of Theorem \ref{main}, 
\begin{equation}\label{convgrad1}
\nabla u_n\rightarrow \nabla u\,, \quad \hbox{strongly in } (L^p(\Omega))^N\, 
 \end{equation}
 \begin{equation}\label{debole1}
\pmb{a}(x, u_n, \nabla u_n)\rightarrow \pmb{a}(x, u, \nabla u) \,,\quad\hbox{strongly in }L^{p'}(\Omega; \R^N)\,,
\end{equation}
\begin{equation}\label{convtermgrad}
b(x,u_n, \nabla u_n)\to b(x,u,\nabla u)\qquad\hbox{strongly in }L^1(\Omega)\,.
\end{equation}
\end{lemma}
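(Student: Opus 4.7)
The key simplification is that the $L^\infty$-estimate \eqref{infty} together with continuity of $h$ provides a uniform bound $|b(x,u_n,\nabla u_n)|\le M|\nabla u_n|^p$ with $M:=\max_{|s|\le C_1}|h(s)|$ independent of $n$. We are therefore in the classical Boccardo--Murat framework for quasilinear problems with bounded natural growth, with the singular term as the only genuine novelty.

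For \eqref{convgrad1}, the plan is to test the weak formulation of \eqref{pd1} with the Boccardo--Murat type function $v_n=\varphi(u_n-u)$, where $\varphi(s)=s\,e^{\alpha s^2}$ and $\alpha=M^2/(4\lambda^2)$; this choice yields $\lambda\varphi'(s)-M|\varphi(s)|\ge \lambda/2$ for every $s\in\R$, which is what is needed to absorb the natural-growth term. Writing $\nabla v_n=\varphi'(u_n-u)(\nabla u_n-\nabla u)$ and splitting
\[
\pmb a(x,u_n,\nabla u_n)\cdot(\nabla u_n-\nabla u)=K_n+\pmb a(x,u_n,\nabla u)\cdot(\nabla u_n-\nabla u),
\]
with $K_n:=\bigl[\pmb a(x,u_n,\nabla u_n)-\pmb a(x,u_n,\nabla u)\bigr]\cdot(\nabla u_n-\nabla u)\ge 0$, and bounding the natural-growth contribution on the right-hand side by $(M/\lambda)\,\pmb a(x,u_n,\nabla u_n)\cdot\nabla u_n$ via ellipticity \eqref{ell}, I would arrive at
\[
\tfrac{1}{2}\int_\Omega K_n\,dx\le R_n,\qquad R_n\xrightarrow[n\to\infty]{}0.
\]
The remainder $R_n$ collects: (i) the integral $\int_\Omega\varphi'(u_n-u)\pmb a(x,u_n,\nabla u)\cdot(\nabla u_n-\nabla u)\,dx$, which vanishes because $\varphi'(u_n-u)\pmb a(x,u_n,\nabla u)\to\pmb a(x,u,\nabla u)$ strongly in $L^{p'}$ (via \eqref{crescitaa}, \eqref{d5pos}, \eqref{d6pos} and Vitali) while $\nabla u_n-\nabla u\rightharpoonup 0$ in $L^p$; (ii) an analogous contribution with $\pmb a(x,u_n,\nabla u_n)\cdot\nabla u\,|\varphi(u_n-u)|$, handled by the $L^{p'}$-boundedness of $\pmb a(x,u_n,\nabla u_n)$ and dominated convergence of $\nabla u\,|\varphi(u_n-u)|\to 0$ in $L^p$; (iii) $\int_\Omega f_n\varphi(u_n-u)\,dx$, which vanishes by strong $L^{m'}$-convergence of $\varphi(u_n-u)\to0$; and (iv) the singular contribution $\int_\Omega g_n(x,u_n)\varphi(u_n-u)\,dx$. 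Monotonicity \eqref{monotonia} and the sign of $K_n$ then force $\int_\Omega K_n\,dx\to 0$; a standard Leray--Lions argument extracts $\nabla u_n\to\nabla u$ almost everywhere, and Vitali's theorem applied to the equi-integrable family $|\nabla u_n|^p$ (using \eqref{h01} and the a.e.\ convergence) yields \eqref{convgrad1}.

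Given \eqref{convgrad1}, the remaining claims are routine. For \eqref{debole1}, continuity of $\pmb a$ gives pointwise convergence, while \eqref{crescitaa} combined with strong $L^p$-convergence of $\nabla u_n$ and the uniform $L^\infty$-bound on $u_n$ makes $\{|\pmb a(x,u_n,\nabla u_n)|^{p'}\}$ equi-integrable; Vitali concludes. For \eqref{convtermgrad}, $b$ is Carath\'eodory so $b(x,u_n,\nabla u_n)\to b(x,u,\nabla u)$ a.e., and the dominating sequence $M|\nabla u_n|^p$ converges in $L^1$ to $M|\nabla u|^p$, so the generalized dominated convergence theorem applies.

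The main obstacle is controlling the singular remainder $\int_\Omega g_n(x,u_n)\varphi(u_n-u)\,dx$, since the pointwise bound $g_n(x,u_n)\le\Lambda u_n^{q-1}$ from \eqref{g_pos} becomes infinite precisely on $\{u_n=0\}$. The approach is to split according to $\{u>\delta\}$ and $\{u\le\delta\}$: on the first set, Egorov's theorem implies $u_n\ge\delta/2$ for $n$ large, so $g_n$ is uniformly bounded and a.e.\ convergence $\varphi(u_n-u)\to 0$ closes the estimate; on the complement one uses $|\varphi(u_n-u)|\le C|u_n-u|$ together with the bound $u_n^{1-q}g_n(x,u_n)\le\Lambda$ to reduce the matter to showing $|\{u=0\}|=0$, i.e.\ the strict positivity of $u$ in $\Omega$, which is a hallmark of the singular structure and is inherited from the approximation scheme in the same spirit as in \cite{BO}.
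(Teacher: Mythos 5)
Your treatment of the non-singular part is a legitimate alternative to the paper's route: instead of the exponential test functions of Lemma \ref{canc} (the Cole--Hopf weights $e^{\pm H(u_n)}$ applied with $v=u_n-u$), you use the classical Boccardo--Murat function $\varphi(s)=s\,e^{\alpha s^2}$, which is indeed viable here because the $L^\infty$-estimate \eqref{infty} makes $h(u_n)$ uniformly bounded; the treatment of \eqref{debole1} and \eqref{convtermgrad} via a.e.\ convergence of gradients plus Vitali/generalized dominated convergence matches the paper's Step 2. The genuine gap is in the only genuinely new term, the singular one $\int_\Omega g_n(x,u_n)\varphi(u_n-u)\,dx$. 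Your reduction of it to ``$|\{u=0\}|=0$, i.e.\ strict positivity of $u$'' fails on two counts. First, positivity is simply not available under the hypotheses of Theorem \ref{main}: assumption \eqref{g_pos} is only an upper bound ($g\equiv0$ is admissible) and $f\ge0$ may vanish identically, so $u\equiv0$ can occur; the positivity in \cite{BO} rests on $g(x,s)=f(x)/s^\gamma$ with $f\gneq0$, monotonicity in $s$ and the strong maximum principle, none of which are assumed here, and the paper never uses nor proves positivity. Second, even granting $|\{u=0\}|=0$, your estimate on $\{u\le\delta\}$ does not close: on $\{u_n<u\}$ the quantity $u_n^{q-1}|u_n-u|\le u_n^{q-1}u$ is not controlled by $u_n^{1-q}g_n(x,u_n)\le\Lambda$, since $u_n$ may be much smaller than $u$ there. (Your Egorov step on $\{u>\delta\}$ is also imprecise: uniform convergence holds only off an exceptional set, where $g_n\le n$ gives no $n$-independent bound; this is patchable, unlike the previous point.)

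The correct device, and the one the paper uses for its term $I^2_n$, exploits the sign structure and splits according to the size of $u_n$, not of $u$: since $g_n\ge0$, the contribution where $u_n-u<0$ (where your $\varphi(u_n-u)<0$) is nonpositive and can be discarded, so only the part paired with $(u_n-u)_+$ matters; on $\{u_n\le\delta\}\cap\{u_n\ge u\}$ one uses $(u_n-u)_+\le u_n$ (because $u\ge0$), whence $u_n^{q-1}(u_n-u)_+\le u_n^{q}\le\delta^{q}$, while on $\{u_n>\delta\}$ one has $u_n^{q-1}\le\delta^{q-1}$ and $|u_n-u|\to0$ in $L^1(\Omega)$ by \eqref{d6pos}. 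Letting $n\to\infty$ and then $\delta\to0$ kills the term with no reference to positivity of $u$. With this replacement your argument can be completed; as it stands, the passage through strict positivity is a dead end under the stated assumptions.
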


\begin{proof}   We proceed to check all the conditions by dividing the proof in several steps.

\noindent {\sl Step 1. Strong convergence of the gradients.} In order to prove \eqref{convgrad1}, we  check that (see \cite[Lemma 5]{BMP3})
\begin{equation}\label{bmp}
\lim_{n\to+\infty}\int_\Omega
[\pmb{a}(x,u_n,\nabla u_n)-\pmb{a}(x,u_n,\nabla u)]\cdot \nabla (u_n-u) \, dx=0\,.
 \end{equation}
 Consider  $v= u_n-u  \in W^{1,p}_0(\Omega)$ in Lemma \ref{canc} (1)  to obtain
\begin{multline}\label{a1}
\int_\Omega e^{\hbox{sign\,}(u_n-u)H(u_n)} 
\pmb{a}(x,u_n,\nabla u_n)\cdot \nabla (u_n-u) \, dx
 \\
\le  \int_\Omega
T_n(g(x,u_n))  e^{\hbox{sign\,}(u_n-u)H(u_n)}  (u_n-u)  \, dx+ \int_\Omega
f_n e^{\hbox{sign\,}(u_n-u)H(u_n)}  (u_n-u)  \, dx\,.
  \end{multline}
Since $ u_n-u=(u_n-u) _+-(u_n-u) _-$ and $-(u_n-u) _-\le 0$ a.e. in $\Omega$, we obtain
 \begin{multline}\label{A1}
\int_\Omega e^{\hbox{sign\,}(u_n-u)H(u_n)}
\pmb{a}(x,u_n,\nabla u_n)\cdot \nabla (u_n-u)  \, dx
 \\
\le  \int_\Omega
T_n(g(x,u_n))  e^{\hbox{sign\,}(u_n-u)H(u_n)}  (u_n-u)_+  \, dx + \int_\Omega
f_n e^{\hbox{sign\,}(u_n-u)H(u_n)}  (u_n-u)  \, dx\,.
  \end{multline}
 Now let us analyze the following integral
 \begin{align}\label{a2}
\int_\Omega
& e^{\hbox{sign\,}(u_n-u)H(u_n)} [\pmb{a}(x,u_n,\nabla u_n)-\pmb{a}(x,u_n,\nabla u)]\cdot \nabla (u_n-u)\, dx
 \\[4mm] 
&\le -
 \int_\Omega
 e^{\hbox{sign\,}(u_n-u)H(u_n)}  \pmb{a}(x,u_n,\nabla u)\cdot \nabla (u_n-u)\, dx
\notag \\[4mm]
& \qquad +
 \int_\Omega
T_n(g(x,u_n))  e^{\hbox{sign\,}(u_n-u)H(u_n)}  (u_n-u)_+  \, dx 
\notag \\[4mm]
&\qquad +\int_\Omega
f_n e^{\hbox{sign\,}(u_n-u)H(u_n)}  (u_n-u)  \, dx
\notag \\[4mm]
&= I^1_n+I^2_n+I^3_n\,.\notag
\end{align}
Let us evaluate $I^1_n$. We prove
\begin{equation}\label{I1n}
\lim_{n\to +\infty}I^1_n =\lim_{n\to +\infty}\int_\Omega
 e^{\hbox{sign\,}(u_n-u)H(u_n)}  \pmb{a}(x,u_n,\nabla u)\cdot \nabla (u_n-u)\, dx=0
\end{equation}
 Indeed, first we split
 \begin{multline*}
   \int_\Omega
 e^{\hbox{sign\,}(u_n-u)H(u_n)}  \pmb{a}(x,u_n,\nabla u)\cdot \nabla (u_n-u)\, dx \\
   =\int_\Omega
 e^{H(u_n)}  \pmb{a}(x,u_n,\nabla u)\cdot \nabla (u_n-u)_+\, dx
 -\int_\Omega
 e^{-H(u_n)}  \pmb{a}(x,u_n,\nabla u)\cdot \nabla (u_n-u)_-\, dx
 \end{multline*}
 We remark that, owing to \eqref{d3pos},
 \begin{equation}\label{ppos}
   \nabla (u_n-u)_+\rightharpoonup 0\quad\hbox{weakly in }L^{p}(\Omega; \R^N)\,.
 \end{equation}
 In fact, if $\varphi\in C_0^\infty(\Omega)$, then
 \[\int_\Omega \frac{\partial (u_n-u)_+}{\partial x_i}\varphi \, dx=-\int_\Omega(u_n-u)_+\frac{\partial \varphi}{\partial x_i}\, dx\]
 tends to 0 for all $i=1, \dots, N$, by \eqref{d4pos}.

 Since
 \begin{align*}
e^{H(u_n)}&  |\pmb{a}(x,u_n,\nabla u)|\\[4mm]
&\le  e^{H(C_1)} (a_0 |\nabla u|^{p-1}+a_1|u_n|^{p-1}+a_2 )
 \,,\qquad \hbox{a.e. in }\Omega\,,
  \end{align*}
   it follows from \eqref{d4pos} that the right hand side converges in $L^{p'}(\Omega)$, so that the left hand side is equiintegrable.
Therefore by \eqref{d5pos} and Vitali's convergence theorem we deduce
\begin{equation}\label{conv1}
e^{H(u_n)}  \pmb{a}(x,u_n,\nabla u)\rightarrow e^{H(u)}\pmb{a}(x,u,\nabla u)\,, \qquad \hbox{strongly in } L^{p'}(\Omega)^N \
\end{equation}
Combining \eqref{conv1} and \eqref{ppos}, we infer that
\[\lim_{n\to +\infty}\int_\Omega
 e^{H(u_n)}  \pmb{a}(x,u_n,\nabla u)\cdot \nabla (u_n-u)_+\, dx=0\]
 Analogously,
\[\lim_{n\to +\infty}\int_\Omega
 e^{-H(u_n)}  \pmb{a}(x,u_n,\nabla u)\cdot \nabla (u_n-u)_-\, dx=0\]
and \eqref{I1n} is proven.

\noindent Let us evaluate $I_n^2$. By the growth condition on $g$ \eqref{g_pos}, for any fixed $\delta>0$, we get:
\begin{multline*}
I^2_n\le  \Lambda\int_\Omega e^{\hbox{sign\,}(u_n-u)H(u_n)}  (u_n-u)_+  u_n^{q-1}\, dx
 \\[4mm]
 \le
 \Lambda\delta^{q-1}\int_{\{u_n\ge \delta  \}}e^{\hbox{sign\,}(u_n-u)H(u_n)}  (u_n-u)_+  \, dx
 +
  \Lambda\int_{\{u_n\le \delta  \}\cap \{u_n\ge u  \} }e^{H(u_n)}  u_n^q \, dx
\\[4mm]
\le
 \Lambda  \delta^{q-1} e^{H(C_1)}
 \int_\Omega |u_n-u|  \, dx
 +
\Lambda  \delta^{q} e^{H(C_1)} |\Omega|
\end{multline*}
It yields
\begin{equation}\label{I2n}
\limsup_{n\to\infty} I^2_n\le \Lambda  \delta^{q} e^{H(C_1)} |\Omega|
\end{equation}
for all $\delta>0$, so that $\lim_{n\to\infty}I^2_n=0$.

Finally we evaluate $I^3_n$. By \eqref{d4pos}, our assumption of summability on $f$ and the $L^\infty$ estimate, we have
\begin{equation}\label{I3n}
\lim_{n\to +\infty}I^3_n =\lim_{n\to +\infty}\int_\Omega
 e^{\hbox{sign\,}(u_n-u)H(u_n)}  f(u_n-u)\, dx=0
\end{equation}
By \eqref{a2}, combining \eqref{I1n}, \eqref{I2n} and \eqref{I3n}, we get
$$
\limsup_{n\to+\infty}\int_\Omega
e^{\hbox{sign\,}(u_n-u)H(u_n)} [\pmb{a}(x,u_n,\nabla u_n)-\pmb{a}(x,u_n,\nabla u)]\cdot \nabla (u_n-u)\, dx
\le0\,.
$$
Since by \eqref{ell}
$$
e^{\hbox{sign\,}(u_n-u)H(u_n)} [\pmb{a}(x,u_n,\nabla u_n)-\pmb{a}(x,u_n,\nabla u)]\cdot \nabla (u_n-u)\ge 0
$$
we deduce
 \begin{equation}\label{a3}
\lim_{n\to+\infty}\int_\Omega
e^{\hbox{sign\,}(u_n-u)H(u_n)} [\pmb{a}(x,u_n,\nabla u_n)-\pmb{a}(x,u_n,\nabla u)]\cdot \nabla (u_n-u)\, dx
=0\,.
 \end{equation}
Therefore by \eqref{a3}, we have
\begin{align}\label{afin}
&\lim_{n\to+\infty}\int_\Omega
 [\pmb{a}(x,u_n,\nabla u_n)-\pmb{a}(x,u_n,\nabla u)]\cdot \nabla (u_n-u)\, dx
 \\[4mm]
 &\notag= \lim_{n\to+\infty}
 \int_\Omega
 e^{\hbox{sign\,}(u_n-u)H(u_n)}   e^{-\hbox{sign\,}(u_n-u)H(u_n)} \pmb{a}(x,u_n,\nabla u)\cdot \nabla (u_n-u)\, dx
\\[4mm]
 &\notag\le  e^{H(C_1)}
\lim_{n\to+\infty}  \int_\Omega
 e^{\hbox{sign\,}(u_n-u)H(u_n)}  \pmb{a}(x,u_n,\nabla u)\cdot \nabla (u_n-u)\, dx=0
\,.
\end{align}
This proves \eqref{bmp} and therefore \eqref{convgrad1}.

\medskip

\noindent {\sl Step 2. Strong convergence of  $\pmb{a}(x, u_n, \nabla u_n)$ and $b(x, u_n, \nabla u_n)$}
 A straightforward conseguence of \eqref{convgrad1} is that, up to subsequences,
 \begin{equation}\label{A5}
    \nabla u_n\rightarrow \nabla u \,,\qquad\hbox{a.e. in }\Omega\,.
  \end{equation}
Easy consequences of the pointwise convergence of the gradients are
 \[\pmb{a}(x, u_n, \nabla u_n)\rightarrow \pmb{a}(x, u, \nabla u)\,,\qquad\hbox{a.e. in }\Omega,\]
 and
 \[b(x, u_n, \nabla u_n)\rightarrow b(x, u, \nabla u)  \,,\qquad\hbox{a.e. in }\Omega\,.\]
Furthermore, the strong convergence of the gradients \eqref{convgrad1} jointly with \eqref{d6pos} imply that the sequence
\[a_0|\nabla u_n|^{p-1}+a_1|u_n|^{p-1}+a_2\]
is equi-integrable. So, our hypothesis \eqref{crescitaa}, gives the equiintegrability of $\pmb{a}(x, u_n, \nabla u_n)$ and, by Vitali's Theorem, \eqref{debole1} follows.
On the other hand, the $L^\infty$--estimate leads to the boundedness of $h(u_n)$. Hence, it follows from \eqref{crescitaa} and the strong convergence of the gradients that the sequence $b(x, u_n, \nabla u_n)$ is equiintegrable. Applying again Vitali's Theorem, we get  \eqref{convtermgrad}.
Additionally, we also obtain
\begin{equation}\label{binl1}
b(x,u,\nabla u)\in L^1(\Omega).
\end{equation}
\end{proof} 
\medskip

\subsection{Existence: proof of Theorem \ref{main}}\label{ex_pos}

In this subsection, we prove that the function $u$ is a weak solution to problem \eqref{P} according to Definition \ref{defsol}.

\noindent Since we have proved  \eqref{binl1},    condition \eqref{b} is satisfied. Therefore we proceed to check the other conditions  in Definition \ref{defsol}.

\noindent {\sl Step 1. $u$ satisfies  \eqref{4.00} }

\noindent
Let us consider the weak solution $u_n$  to the approximate problem \eqref{pd1}. For any fixed  $k>0$ we take  $v=G_k(u_n^{1-q})$ in Lemma \ref{canc} (1) and so
\begin{align*}
    (1-q)\lambda&\int_{\{u_n^{1-q}>k\}} e^{ H(u_n)}\ \pmb{a}(x,u_n,\nabla  u_n)\cdot \nabla u_n \, u_n^{-q} \, dx \\
    &\le \int_{\Omega} e^{ H(u_n)} T_n(g(x,u_n))G_k(u_n^{1-q})\, dx +\int_\Omega e^{ H(u_n)} f_n\, G_k(u_n^{1-q}) \, dx\,.
\end{align*}
By assumption on $g$ \eqref{g_pos}, since $u_n$ is a nonnegative function, we have:
$$
T_n(g(x, u_n))\,G_k(u_n^{1-q})\le g(x, u_n)\,u_n^{1-q} \le \Lambda\,,
$$
a.e. in $\{u_n^{1-q}\ge k\}$. 

 By ellipticity condition \eqref{ell}, Remark 2.1 and estimate \eqref{infty}, we get
\begin{align*}
    (1-q)\lambda&\int_{\{u_n^{1-q}>k\}}e^{ H(u_n)} u_n^{-q}|\nabla u_n|^p\, dx \\
    &\le C \int_{\Omega}T_n(g(x,u_n))G_k(u_n^{1-q})\, dx +C\int_\Omega f_n\, G_k(u_n^{1-q})\, dx\\
    &\le C\Lambda |\Omega | +  \,C\int_\Omega f\, G_k(u_n^{1-q})\, dx\\
     &\le C\Lambda |\Omega | +   \,C\|u_n\|_\infty \int_\Omega f\, dx \le C \,.
\end{align*}
Since $e^{ H(u_n)} \ge 1$,  
we obtain
\[\int_{\{u_n^{1-q}>k\}} |u_n|^{-q}|\nabla u_n|^p\, dx\le C\,.\]
So, owing to the Fatou Lemma,
\begin{equation}\label{4.1}
   \int_{\{u^{1-q}>k\}} |u|^{-q}|\nabla u|^p\, dx
\le C\,.
\end{equation}
Now we let $k$ go to $0$ on the left-hand side,  by monotone convergence Theorem,
$$
\lim_{k\to0}\int_{\{u^{1-q}>k\}} u^{-q}|\nabla u|^pdx=\int_{\{u>0\}} u^{-q}|\nabla u|^pdx<+\infty\,.
$$
Applying \cite[Lemma 2.5]{GPS}, we obtain that $u^{1-\frac{q}p}\in W^{1,p}_0(\Omega)$ and
$$
\int_{\{u>0\}} u^{-q}|\nabla u|^pdx=\int_{\Omega} u^{-q}|\nabla u|^pdx\,,
$$
so that $u^{-q}|\nabla u|^p\in L^1(\Omega)$.

\medskip

\noindent {\sl Step 2. $u$ satisfies  \eqref{4.01_pos}}

\noindent Consider  $v\in W^{1,p}_0(\Omega)\cap L^\infty(\Omega)$, $v\ge 0$ a.e. in $\Omega$ as test function in \eqref{pd1}, we get
\begin{equation*}
\int_\Omega \pmb{a}(x,u_n,\nabla  u_n)\cdot \nabla v\, dx
    =\int_\Omega b(x,u_n,\nabla u_n)v\, dx +\int_\Omega T_n\left(g (x, u_n )\right)v\, dx +\int_\Omega f_n v\, dx \,.
\end{equation*}

Therefore passing to the limit for $n$ which goes to $+\infty$,
by  \eqref{d5pos}, \eqref{convtermgrad} and Fatou's lemma, we get
\begin{multline}\label{finale}
\lim_{n\to+\infty}\int_\Omega \pmb{a}(x,u_n,\nabla  u_n)\cdot \nabla v\, dx\\
 -\lim_{n\to +\infty}\left (\int_\Omega b(x,u_n,\nabla u_n)v\, dx +\int_\Omega f_n v\, dx\right)\\
    \ge \int_\Omega g (x, u)v\, dx  \,.
\end{multline}
that is
\begin{multline}\label{finale}
\int_\Omega \pmb{a}(x,u,\nabla  u)\cdot \nabla v\, dx
 -\int_\Omega b(x,u,\nabla u)v\, dx  - \int_\Omega f v\, dx  \ge \int_\Omega g (x, u)v\, dx \,.
\end{multline}
This yields the conclusion for $v\ge0$. The general case follows from the decomposition $v=v_+-v_-$.

\medskip

\noindent {\sl Step 3. Proof of  \eqref{4.0} by passing to the limit in the approximate problems}.

Let   $\varphi$ be any nonnegative function belonging to $ W^{1,p}(\Omega)\cap L^\infty(\Omega)$. Taking
$T_k(u_n)\varphi$ as test function in \eqref{pd1} and disregarding a nonnegative term, we have
\begin{multline}\label{step3_1}
\int_\Omega T_k(u_n)\left (\pmb a(x,u_n,\nabla  u_n)\cdot\nabla\varphi\right )\, dx
    \le
    \int_\Omega b(x,u_n, \nabla u_n) T_k(u_n)\varphi\, dx\\
    +\int_\Omega T_n\left(g (x, u_n )\right)T_k(u_n)\varphi\, dx +\int_\Omega f_n T_k(u_n)\varphi\, dx \,.
\end{multline}

Now we let $n$ go to $+\infty$ in the inequality \eqref{step3_1}. On the left-hand side we use the strong convergence of $\big\{ \pmb a(x,u_n,\nabla  u_n) \big\}_n$ in  $L^{p'}(\Omega)$, \eqref{debole1}, the pointwise convergence of $u_n$ \eqref{d5pos} and Lebesgue's dominated convergence theorem in order to obtain
$$
\lim_{n\to +\infty} \int_\Omega T_k(u_n)\left (\pmb a(x,u_n,\nabla  u_n)\cdot\nabla\varphi\right )\, dx=
\int_\Omega T_k(u)\left (\pmb a(x,u,\nabla  u)\cdot\nabla\varphi\right )\, dx\,.
$$
In analogous way,  we evaluate the first term on the right-hand side \eqref{step3_1}. We apply the strong convergence of $\big\{ b(x,u_n,\nabla  u_n) \big\}_n$ in $L^1(\Omega)$ given by \eqref{binl1}, the pointwise convergence of $u_n$ \eqref{d5pos} and Lebesgue's dominated convergence theorem in order to obtain
$$
\lim_{n\to +\infty}  \int_\Omega b(x,u_n, \nabla u_n) T_k(u_n)\varphi\, dx= \int_\Omega b(x,u, \nabla u) T_k(u)\varphi\, dx\,.
$$

Concerning the second term on the right-hand side of \eqref{step3_1}, we observe that
\begin{multline*}
  T_n\left(g (x, u_n )\right)T_k(u_n) \\
  =T_n\left(g (x, u_n )\right)T_k(u_n)\big|_{\{u_n\le k\}}+T_n\left(g (x, u_n )\right)T_k(u_n)\big|_{\{u_n> k\}}\\
  \le \Lambda u_n^{q-1} u_n\big|_{\{u_n\le k\}}+\Lambda k u_n^{q-1}\big|_{\{u_n> k\}}
  \le   \Lambda  k^q\,.
\end{multline*}
 Therefore  we can apply Lebesgue's dominated convergence Theorem and we get
$$
\lim_{n\to +\infty}  \int_\Omega T_n\left(g (x, u_n )\right)T_k(u_n)\varphi\, dx= \int_\Omega g (x, u )T_k(u)\varphi\, dx\,.
$$
Finally it is easy to verify that 
$$
\lim_{n\to +\infty}  \int_\Omega f_n T_k(u_n)\varphi\, dx= \int_\Omega f T_k(u)\varphi\, dx.
$$
Hence, passing to the limit for $n$ which goes to $+\infty$ in \eqref{step3_1}, we get
\begin{multline*}
    \int_\Omega T_k(u)\left( \pmb a(x,u,\nabla  G_k(u))\cdot\nabla\varphi \right )\, dx \\
    \le
    \int_\Omega b(x,u, \nabla  u) T_k(u)\varphi\, dx
    +\int_\Omega g (x, u)T_k(u)\varphi\, dx +\int_\Omega f T_k(u)\varphi\, dx\\
    \le \int_\Omega b(x,u, \nabla  u) T_k(u)\varphi\, dx
    +k\int_\Omega g (x, u)\varphi\, dx +k\int_\Omega f \varphi\, dx \,.
\end{multline*}
Dividing by $k$ and letting $k$ go to $0$,  it follows that
\begin{gather*}
    \int_{\{u\ne0\}} \left (\pmb a(x,u,\nabla  u)\cdot\nabla\varphi\right )\, dx  \\
    \le
    \int_{\{u\ne0\}} b(x,u, \nabla  u) \varphi\, dx
    + \int_\Omega g(x,u)\,\varphi\, dx +\int_\Omega f \varphi\, dx
\end{gather*}
holds true. As a consequence of Stampacchia's Theorem (cf. \cite{S}), we obtain
\begin{gather*}
    \int_{\Omega} \left (\pmb a(x,u,\nabla  u)\cdot\nabla\varphi\right )\, dx  \\
    \le
    \int_{\Omega} b(x,u, \nabla  u) \varphi\, dx
    + \int_\Omega g(x,u)\,\varphi\, dx +\int_\Omega f \varphi\, dx
\end{gather*}
Since \eqref{finale} yields the reverse inequality, we conclude
\begin{gather*}
    \int_{\Omega} \left (\pmb a(x,u,\nabla  u)\cdot\nabla\varphi\right )\, dx  \\
    =
    \int_{\Omega} b(x,u, \nabla  u) \varphi\, dx
    + \int_\Omega g(x,u)\,\varphi\, dx +\int_\Omega f \varphi\, dx
\end{gather*}
for all $\varphi\ge0$. The general case is now straightforward.   \qed

\section{Existence result for $m= N/p$}

The main result of the section concerns existence of nonnegative weak solutions to problem \eqref{P}   when the datum $f$ is  an element of the Lebesgue space $L^m(\Omega)$, with $m= N/p$. Its statement is the following.

\begin{theorem}\label{mainlimit}
Assume \eqref{crescitaa} - \eqref{ipf_pos} with
$$
f\in L^\frac Np(\Omega)
$$
and
\begin{equation}\label{ipo1}
\lim_{s\to \pm \infty}\frac{e^{H(s)}}{(1+\Phi(s))^{p-1}}=0\,.
\end{equation}
Then problem \eqref{P}  has at least a weak solution $u$ such that $\Phi(u)\in W_0^{1,p}(\Omega)\cap L^r(\Omega)$, and hence $u\in W_0^{1,p}(\Omega)\cap L^r(\Omega)$, for all $1\le r<\infty$.
\end{theorem}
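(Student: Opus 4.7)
\medskip

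\noindent\textbf{Proof plan for Theorem \ref{mainlimit}.} The argument follows the three-step scheme of Theorem \ref{main}: obtain uniform a priori estimates on the approximate solutions $u_n$ of \eqref{pd1}, deduce strong convergence of $\nabla u_n$, and pass to the limit. The crucial difference is that for $f\in L^{N/p}(\Omega)$ one cannot hope for a uniform $L^\infty$ bound; instead one obtains uniform control of $\Phi(u_n)$ in $L^r(\Omega)$ for every $r<\infty$. The hypothesis \eqref{ipo1} is again the driving ingredient.

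\medskip

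\noindent\emph{Step 1: A priori estimates.} I would begin with $\Psi(s)=G_k(\Phi(s))$ in Lemma \ref{canc} (2) applied to the truncated singular term $T_n(g(x,u_n))$, reproducing the calculations \eqref{est}--\eqref{est4} of Lemma \ref{apriori} verbatim. With $m=N/p$, the H\"older exponent $\frac{1}{m'}-\frac{1}{p^*}$ computed for the present case equals $\frac{(N-p)(p-1)}{Np}>0$, so \eqref{est4} still holds. The Stampacchia iteration of \cite[Proposition 3.1]{PS} in the borderline case no longer produces a level $k_0$ at which $|A_k|$ vanishes; rather, using the decay $\eta(k)\to 0$ from \eqref{lim_eta}, one derives uniform estimates of $\Phi(u_n)$ (and thus $u_n$) in $L^r(\Omega)$ for every finite $r$. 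Combined with \eqref{figrad}, this yields a uniform bound for $\nabla u_n$ in $L^p(\Omega;\R^N)$ and, more generally, for $\nabla\Phi(u_n)$ in $L^p(\Omega;\R^N)$.

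\medskip

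\noindent\emph{Step 2: Strong convergence of $\nabla u_n$.} From the previous step one extracts a subsequence such that $u_n\rightharpoonup u$ in $W_0^{1,p}(\Omega)$, $u_n\to u$ in $L^r(\Omega)$ for all $1\le r<\infty$, and $u_n\to u$ a.e. I would then repeat the argument of Lemma \ref{strong convergence}, using Lemma \ref{canc} (1) with $v=u_n-u$ and decomposing the resulting quantity as $I_n^1+I_n^2+I_n^3$. The two places where the $L^\infty$ bound was used must be replaced: the factors $e^{H(C_1)}$ and $e^{H(u_n)}a_1|u_n|^{p-1}$ are now controlled by Vitali's theorem, exploiting the strong convergence in every $L^r$ together with the growth condition \eqref{ipo1}, which ensures uniform integrability of $e^{H(u_n)}\bigl(1+|\nabla u|^{p-1}+|u_n|^{p-1}\bigr)$. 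The estimate of $I_n^2$ is handled by the same splitting $\{u_n\le\delta\}\cup\{u_n>\delta\}$ as before, but with $e^{H(C_1)}$ replaced by uniformly integrable dominants. The ellipticity and monotonicity then give \eqref{convgrad1}.

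\medskip

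\noindent\emph{Step 3: Passage to the limit and conclusion.} Once $\nabla u_n\to\nabla u$ strongly in $L^p$ and a.e., the conclusions \eqref{debole1} and \eqref{convtermgrad} follow by Vitali's theorem, provided one proves equi-integrability of $b(x,u_n,\nabla u_n)=h(u_n)|\nabla u_n|^p$. This is precisely where the absence of an $L^\infty$ bound is the main obstacle: since $h(u_n)$ is no longer automatically bounded, one needs to exploit \eqref{ipo1}, which forces $h$ (equivalently $H$) to grow sufficiently slowly, together with the uniform $L^r$ bounds on $u_n$ and the strong $L^p$ convergence of $\nabla u_n$, to ensure equi-integrability. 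After this is established, verification of \eqref{4.00}, \eqref{b}, \eqref{4.01_pos} and \eqref{4.0} proceeds exactly as in Section 3.3: one uses $G_k(u_n^{1-q})$ as test function via Lemma \ref{canc} (1) for \eqref{4.00}, Fatou's lemma in the approximating equation for \eqref{4.01_pos}, and the test function $T_k(u_n)\varphi$ followed by division by $k$ and Stampacchia's theorem to obtain the integral identity on $\Omega$.

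\medskip

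\noindent The main technical obstacle is therefore the equi-integrability of the gradient term $h(u_n)|\nabla u_n|^p$ in the borderline setting; this is overcome by combining the strong convergence of the gradients with the sharp growth control on $h$ provided by \eqref{ipo1} and the $L^r$-estimates established in Step 1.
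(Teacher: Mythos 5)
Your plan breaks down at the two points where the loss of the $L^\infty$ bound actually bites. In Step 2 you propose to rerun the proof of Lemma \ref{strong convergence} with $v=u_n-u$ and to replace the factors $e^{H(C_1)}$ by ``Vitali plus the $L^r$-bounds on $e^{H(u_n)}$''. This does not work: in the term $I^1_n$ one must show that $e^{H(u_n)}\pmb a(x,u_n,\nabla u)$ converges (or is at least equiintegrable) in $L^{p'}(\Omega;\R^N)$, and the dominant contains $e^{H(u_n)}a_0|\nabla u|^{p-1}$, where $|\nabla u|^{p-1}$ is only a fixed $L^{p'}$ function; boundedness of $e^{H(u_n)}$ in every $L^r$ with $r<\infty$ does not yield equiintegrability of $e^{p'H(u_n)}|\nabla u|^{p}$, since the $L^1$ function $|\nabla u|^p$ admits no higher-integrability reserve to absorb the unbounded weight. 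Moreover, the final passage from the weighted limit \eqref{a3} to \eqref{bmp} requires a uniform bound on $e^{-\hbox{sign\,}(u_n-u)H(u_n)}$, i.e.\ a lower bound on the weight, which in the bounded case came precisely from $\|u_n\|_\infty\le C_1$ and is simply unavailable here. This is why the paper abandons $v=u_n-u$ altogether and instead writes $\nabla(u_n-u)=\nabla T_k(u_n-T_h(u))+\nabla G_k(u_n-T_h(u))+\nabla(T_h(u)-u)$, tests with $T_k(u_n-T_h(u))$ (so that on the relevant set $u_n\le k+h$ and the old argument applies with $C_1$ replaced by $k+h$), and controls the two remaining pieces by the $L^{p'}$-boundedness of $\pmb a(x,u_n,\nabla u_n)$ together with the uniform tail estimate $\lim_{k\to\infty}\sup_n\int_\Omega|\nabla G_k(u_n)|^p\,dx=0$, an estimate your Step 1 never establishes.

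The second gap is in Step 3: you correctly identify that equiintegrability of $h(u_n)|\nabla u_n|^p$ is the main obstacle, but you only assert that \eqref{ipo1} and the $L^r$-bounds ``ensure'' it, without a mechanism; note \eqref{ipo1} gives no pointwise control of $h$ by itself. The paper's proof requires a specific device: taking $\Psi(s)=\Xi(s)=\int_0^s h(\sigma)\chi_{\{\sigma>k\}}\,d\sigma$ in Lemma \ref{canc}(2), which produces $\lim_{k\to\infty}\sup_n\int_{\{u_n>k\}}h(u_n)|\nabla u_n|^p\,dx=0$, and then splitting $\int_E h(u_n)|\nabla u_n|^p$ according to $\{u_n\le k\}$ (where $h(u_n)\le\max_{[0,k]}h$ and the strong gradient convergence applies) and $\{u_n>k\}$. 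Without this (or an equivalent) argument your plan is incomplete. Finally, in Step 1 the paper does not iterate \eqref{est4}: it uses the power test functions $\Psi(s)=\Phi(s)^\gamma$, $\gamma\ge1$, which give both the $L^r$-bounds for all $r<\infty$ and, taking $\gamma>1$, the tail estimate on $\nabla G_k(u_n)$ mentioned above; your borderline Stampacchia variant might be made to yield the $L^r$-bounds, but as written it is only sketched and it misses the tail estimate that the rest of the proof needs.
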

As in the previous case, we consider the  approximate problems \eqref{pd1}, which for any fixed $n$, has at least a nonnegative bounded weak solution $u_n\in W^{1,p}_0(\Omega)\cap L^\infty(\Omega)$.
We begin by  proving a priori estimates for weak solutions $u_n$ which implies the existence of a limit function $u$ which is proven to be a weak solution to problem \eqref{P}.

\subsection{A priori estimates when $m=N/p$}\label{s3dex2}
In this subsection we  prove that the sequence of {\sl approximate solutions} $\{u_n\}_n$ satisfies a priori estimates in $L^r(\Omega)$, for any $r>1$, and in $W^{1,p}_0(\Omega)$. By these estimates we  deduce that $u_n$, up to subsequences, converges to a limit function $u$ which is the sought solution.

\begin{lemma}\label{apriori1}
({\sl Estimates in $L^r(\Omega)$ for all $1\le r<\infty$ and in $W^{1,p}_0(\Omega)$}).  For any fixed $n\in \N$, let $u_n\in W^{1,p}_0(\Omega)\cap L^\infty(\Omega)$ be a weak solution to problem \eqref{pd1}. Under the assumptions of Theorem \ref{main}, the following estimates hold true:
\begin{equation}\label{infty1}
\|u_n\|_{L^r(\Omega)}\le  C_3\qquad 1\le r<\infty\,, 
\end{equation}
\begin{equation}\label{h02}
\|\nabla u_n\|_{L^p(\Omega)}\le  C_4\,,
\end{equation}
where $ C_3$, $ C_4$ are positive constants which only depend on $|\Omega|$, $N$, $m$, $p$, $\|f\|_{L^m}$, $\lambda$  and on $r$,
but do  not depend on $n$.

Furthermore, it is also satisfied
\begin{equation}\label{conv:03a}
  \lim_{k\to\infty}\sup_{n\in\N}\int_\Omega|\nabla G_k(u_n)|^p\, dx=0\,.
\end{equation}
\end{lemma}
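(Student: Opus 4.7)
The plan is to follow the same pattern as Lemma~\ref{apriori}: apply the Cancellation Lemma~\ref{canc}(2) with a suitable $\Psi$, use the exponential splitting \eqref{em'}--\eqref{e} furnished by \eqref{ipo1}, and deduce the required estimates. The key difference is that $m=N/p$ is the borderline exponent: the Stampacchia-type iteration issuing from \eqref{est4} no longer closes to an $L^\infty$--bound, but instead produces super-polynomial decay of the distribution function of $\Phi(u_n)$, which is precisely what is needed for \eqref{infty1}.

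First I would choose $\Psi(s)=G_k(\Phi(s))$ in Lemma~\ref{canc}(2) and, repeating verbatim the derivation leading to \eqref{est4}, obtain for $k$ larger than some $k_0$
\[
\Bigl(\int_\Omega |G_k(\Phi(u_n))|^{p^*}\,dx\Bigr)^{(p-1)/p^*}\le C\,\eta(k)\,k^{p-1}\,|A_k|^{\frac{1}{m'}-\frac{1}{p^*}},
\]
where $A_k=\{\Phi(u_n)\ge k\}$. A direct computation shows that for $m=N/p$ the exponent satisfies $\tfrac{p^*}{p-1}\bigl(\tfrac{1}{m'}-\tfrac{1}{p^*}\bigr)=1$; combining the previous display with the elementary bound $\int|G_k(\Phi(u_n))|^{p^*}\,dx\ge k^{p^*}|A_{2k}|$ then yields the recursion
\[
|A_{2k}|\le C\,\eta(k)^{p^*/(p-1)}\,|A_k|,\qquad k\ge k_0,
\]
with $C$ independent of $n$. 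Since $\eta(k)\to0$ by \eqref{lim_eta}, the coefficient $C\eta(k)^{p^*/(p-1)}$ can be made arbitrarily small for $k$ large, and iterating produces a uniform bound $|A_k|\le C_r k^{-r}$ for every finite $r$. The layer-cake formula, together with the comparison $|u_n|\le|\Phi(u_n)|$ from \eqref{fiu}, then gives \eqref{infty1}.

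Once \eqref{infty1} is available, \eqref{h02} follows by choosing $\Psi(s)=s$ in Lemma~\ref{canc}(2), exactly as in \eqref{est5}: the right-hand side becomes
\[
\Lambda\int_\Omega e^{H(u_n)}u_n^q\,dx+\int_\Omega e^{H(u_n)}u_n\,f\,dx,
\]
and, since \eqref{ipo1} implies that $e^{H(s)}$ is dominated by any power of $\Phi(s)$, the family $e^{H(u_n)}u_n$ is bounded in every $L^s(\Omega)$ uniformly in $n$, so the first integral is controlled by $|\Omega|$ and the second by H\"older with exponents $m=N/p$ and $m'=N/(N-p)$. For \eqref{conv:03a} I would use $\Psi(s)=G_k(s)$ in Lemma~\ref{canc}(2): the left-hand side dominates $\lambda\int_\Omega|\nabla G_k(u_n)|^p\,dx$, while the right-hand side is an integral over $\{u_n>k\}$ of the equi-integrable family $e^{H(u_n)}u_n(g(x,u_n)+f)$, which tends to $0$ uniformly in $n$ as $k\to\infty$ because $|\{u_n>k\}|\to0$ uniformly in $n$ by \eqref{infty1}.

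The main obstacle is precisely the borderline Stampacchia step: the strict inequality $\theta>1$ that powers the iteration for $m>N/p$ is lost at $m=N/p$, and one must exploit carefully the smallness $\eta(k)\to0$ coming from \eqref{ipo1} to compensate. Unlike in Lemma~\ref{apriori}, it is now crucial that $\eta(k)$ actually tends to $0$, rather than merely being bounded, because here it has to generate the super-polynomial decay of $|A_k|$ that yields \eqref{infty1} for every $r<\infty$.
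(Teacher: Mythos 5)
Your proposal is correct, but for \eqref{infty1} it takes a genuinely different route from the paper. The paper never returns to the level-set inequality \eqref{est4}: it tests with the Moser-type family $\Psi(s)=\Phi(s)^\gamma$, $\gamma\ge1$, splits the right-hand side over $\{\Phi(u_n)>k\}$ and $\{\Phi(u_n)\le k\}$, and uses the smallness of $\eta(k)$ to absorb the tail into the Sobolev left-hand side (this is where $m=N/p$, i.e. $m'=p^*/p$, makes the exponents match); the resulting bound on $\|\Phi(u_n)\|_{L^{(\gamma+p-1)p^*/p}}$ for every $\gamma\ge1$ gives all the $L^r$ estimates, the case $\gamma=1$ gives \eqref{h02}, and the weighted bound $\int_\Omega\Phi(u_n)^{\gamma-1}|\nabla\Phi(u_n)|^p\,dx\le C$ with some $\gamma>1$ gives \eqref{conv:03a} immediately via $\int_\Omega|\nabla G_k(\Phi(u_n))|^p\,dx\le Ck^{1-\gamma}$. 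You instead keep the test function $G_k(\Phi(s))$ of Lemma \ref{apriori}, observe (correctly) that \eqref{est4} survives at the borderline exponent because the term $(\int_\Omega|G_k(\Phi(u_n))|^{pm'}dx)^{1/m'}$ becomes exactly $(\int_\Omega|G_k(\Phi(u_n))|^{p^*}dx)^{p/p^*}$ and is still absorbed thanks to $\eta(k)\to0$, and then run a De Giorgi-type dyadic iteration $|A_{2k}|\le (C\eta(k))^{p^*/(p-1)}|A_k|$ whose vanishing coefficient yields $|A_k|\le C_r k^{-r}$ for every $r$, uniformly in $n$; this is an equivalent (indeed slightly more quantitative) way to reach the $L^r$ bounds. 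The price is that \eqref{h02} and \eqref{conv:03a} are no longer byproducts and need the two extra applications of Lemma \ref{canc}(2) you indicate ($\Psi(s)=s$ and $\Psi(s)=G_k(s)$); both work, since \eqref{ipo1} gives $e^{H(s)}\le C(1+\Phi(s))^{p-1}$ for $s\ge0$, so that $e^{H(u_n)}u_n\big(g(x,u_n)+f\big)$ is bounded in some $L^t(\Omega)$ with $t>1$ uniformly in $n$, while $|\{u_n>k\}|\to0$ uniformly in $n$. Only a phrasing quibble: \eqref{ipo1} dominates $e^{H(s)}$ by the fixed power $(1+\Phi(s))^{p-1}$, not by ``any power'' of $\Phi$; that fixed power, combined with the $L^r$ bounds on $\Phi(u_n)$ for every $r$, is all your argument actually uses.
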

\begin{proof} For any $\gamma\ge1$, consider $\Psi(s)=\Phi(s)^\gamma$  in Lemma \ref{canc} (2). Then, 
 since $e^\frac{pH(u_n)}{p-1} \ge1$, we get
\begin{multline*}
\lambda \int_\Omega \gamma  \Phi(u_n)^{\gamma-1}|\nabla \Phi(u_n)|^pdx
  \le \lambda \int_\Omega \gamma e^\frac{pH(u_n)}{p-1} \Phi(u_n)^{\gamma-1}|\nabla u_n|^pdx\\
  \le \int_\Omega e^{H(u_n)} \Phi(u_n)^{\gamma} g(x, u_n)\, dx+\int_\Omega e^{H(u_n)} \Phi(u_n)^{\gamma} f\, dx
\end{multline*}
This inequality and Sobolev's imbedding Theorem lead to
\begin{multline}\label{inec:01}
\left(\int_\Omega \Phi(u_n)^{\frac{\gamma+p-1}{p}p^*}dx\right)^{\frac{p}{p^*}}\le C\int_\Omega|\nabla \Phi(u_n)^{\frac{\gamma+p-1}{p}}|^pdx\\
\le C\int_{\{\Phi(u_n)>k\}}e^{H(u_n)} \Phi(u_n)^{\gamma}g(x,u_n)\, dx
+C\int_{\{\Phi(u_n)>k\}}e^{H(u_n)} \Phi(u_n)^{\gamma}f\, dx\\
+ C\int_{\{\Phi(u_n)\le k\}}e^{H(u_n)} \Phi(u_n)^{\gamma}g(x,u_n)\, dx
+C\int_{\{\Phi(u_n)\le k\}}e^{H(u_n)} \Phi(u_n)^{\gamma}f\, dx
\end{multline}
where $k\ge 1$.
We handle the integrals over $\{\Phi(u_n)>k\}$ employing the function $\eta$ defined in \eqref{def_eta}. Notice that, since $k\ge1$, it results $1+\Phi(u_n)\le 2\Phi(u_n)$  on the set $\{\Phi(u_n)>k\}$ and therefore, we deduce
\begin{align*}
  \int_{\{\Phi(u_n)>k\}}e^{H(u_n)}& \Phi(u_n)^{\gamma}g(x,u_n)\, dx\\
  &\le \int_{\{\Phi(u_n)>k\}}\eta(k)(1+\Phi(u_n))^{p-1} \Phi(u_n)^{\gamma}g(x,u_n)\, dx \\
  &\le \int_{\{\Phi(u_n)>k\}}\eta(k)2^{p-1} \Phi(u_n)^{\gamma+p-1}g(x,u_n)\, dx
\end{align*}
 Thus, we get
\begin{align}\label{1}
\int_{\{\Phi(u_n)>k\}}e^{H(u_n)} \Phi(u_n)^{\gamma}&g(x,u_n)\, dx\le C\eta(k) \int_{\{\Phi(u_n)>k\}}\Phi(u_n)^{\gamma+p-1}u_n^{q-1}\, dx\\
&\notag\\
&\notag\le C\frac{\eta(k)}{[\Phi^{-1}(k)]^{1-q}}\int_{\{\Phi(u_n)>k\}}\Phi(u_n)^{\gamma+p-1}\, dx \notag\\
&\notag\\
&\notag\le C\frac{\eta(k)}{[\Phi^{-1}(k)]^{1-q}}|\Omega|^{p/N}\left(\int_{\Omega}\Phi(u_n)^{(\gamma+p-1)\frac{p^*}p}\, dx\right)^{\frac p{p^*}}\,.
\end{align}
 Moreover by H\"older inequality, we get
\begin{align}\label{2}
  \int_{\{\Phi(u_n)>k\}}e^{H(u_n)} \Phi(u_n)^{\gamma}f\, dx
 & \le C \eta(k) \int_{\{\Phi(u_n)>k\}}\Phi(u_n)^{\gamma+p-1} f\, dx\\
 &\notag\le C\eta(k) \|f\|_{N/p}\left(\int_{\Omega}\Phi(u_n)^{(\gamma+p-1)\frac{p^*}p}\, dx\right)^{\frac p{p^*}}\,.
\end{align}
Arguing as in Lemma \ref{apriori},  since $\eta (k)$ goes to zero when $k$ tends to $+\infty$,  the terms in the right-hand side of \eqref{1} and \eqref{2}  can be absorbed by the left-hand side of \eqref{inec:01}. 
Hence, there exists $k$ larger enough such that \eqref{inec:01} becomes
\begin{multline}\label{inec:02}
\left(\int_\Omega \Phi(u_n)^{\frac{\gamma+p-1}{p}p^*}dx\right)^{\frac{p}{p^*}}\le C\int_\Omega|\nabla \Phi(u_n)^{\frac{\gamma+p-1}{p}}|^pdx\\
\le  C\int_{\{\Phi(u_n)\le k\}}e^{H(u_n)} \Phi(u_n)^{\gamma}g(x,u_n)\, dx
+C\int_{\{\Phi(u_n)\le k\}}e^{H(u_n)} \Phi(u_n)^{\gamma}f\, dx\,.
\end{multline}
Now observe that the right hand side is bounded. Indeed, since by \eqref{g_pos} and \eqref{fisuu}, it follows that
\begin{multline*}
e^{H(u_n)} \Phi(u_n)^{\gamma}g(x,u_n)\le \Lambda u_n^{q-1}e^{H(u_n)} \Phi(u_n)^{\gamma}\\= \Lambda e^{H(u_n)} u_n^{\gamma+q-1}\left(\frac{\Phi(u_n)}{u_n}\right)^\gamma
\le \Lambda e^{H(u_n)} u_n^{\gamma+q-1}e^{\gamma\frac{H(u_n)}{p-1}}\,,
\end{multline*}
Thus, we get
\[\int_{\{\Phi(u_n)\le k\}}e^{H(u_n)} \Phi(u_n)^{\gamma}g(x,u_n)\, dx\le C |\Omega| e^{H(\Phi^{-1}(k))} \Phi^{-1}(k)^{\gamma+q-1}e^{\gamma\frac{H(\Phi^{-1}(k))}{p-1}}.\]
The remainder term in \eqref{inec:02} is obviously bounded,  that is
$$
\int_{\{\Phi(u_n)\le k\}}e^{H(u_n)} \Phi(u_n)^{\gamma}f\, dx \le 
C e^{H(\Phi^{-1}(k))} \Phi^{-1}(k)^{\gamma}\|f\|_{L^1}\,.
$$
Therefore, it follows from \eqref{inec:02} that
\[\left(\int_\Omega \Phi(u_n)^{\frac{\gamma+p-1}{p}p^*}dx\right)^{\frac{p}{p^*}}\le C\int_\Omega|\nabla \Phi(u_n)^{\frac{\gamma+p-1}{p}}|^pdx\le C\,,
\]
for all $\gamma\ge1$. 
 Hence by the arbitrary of $\gamma$ and therefore of  $\frac{\gamma+p-1}{p}p^*\ge 1$,
 the sequence $\{\Phi(u_n)\}_n$ is bounded in every $L^r(\Omega)$ such that $1\le r <\infty$ and, taking $\gamma=1$, in $W_0^{1,p}(\Omega)$. We conclude that the same features hold for $\{u_n\}_n$.

Condition \eqref{conv:03a} follows from
\begin{equation*}
  \lim_{k\to\infty}\sup_{n\in\N}\int_\Omega|\nabla G_k(\Phi(u_n))|^p\, dx=0\,.
\end{equation*}
and it yields from performing the following computations (with some $\gamma>1$):
\begin{equation*}
\int_\Omega|\nabla G_k(\Phi(u_n))|^p\, dx\le
\int_\Omega \frac{\Phi(u_n)^{\gamma-1}}{k^{\gamma-1}}|\nabla G_k(\Phi(u_n))|^p\, dx
\le \frac{C}{k^{\gamma-1}}\,.
\end{equation*}
\end{proof}

\begin{remark}\rm
In the previous proof, we have seen that the sequence $\{\Phi(u_n)\}_n$ is bounded in every $L^r(\Omega)$,  with $1\le r<+\infty$. As a consequence of assumption \eqref{ipo1}, we deduce that
$\{e^{H(u_n)}\}_n$ is also bounded in every $L^r(\Omega)$  with $1\le r<+\infty$.
\end{remark}

\medskip

\subsection{Strong convergence of $\nabla u_n$} \label{sec:convgrad}
 In this subsection we prove that  the sequence of the {\sl approximate solutions} $\{u_n\}_n$ and their gradients converge to a function $u$ and its gradient $\nabla u$ respectively.  Moreover  we prove that every term in equation \eqref{pd1} converges.
 
For any fixed $n\in \N$, let $u_n\in W^{1,p}_0(\Omega)\cap L^\infty(\Omega)$ be a weak solution to problem \eqref{pd1}. 
By Lemma \ref{apriori1}  there exists a nonnegative function $u\in W_0^{1,p}(\Omega)\cap L^r(\Omega)$ for all $1\le r<+\infty$ such that, up to subsequences, the convergences  in \eqref{d3pos},  \eqref{d4pos} and \eqref{d5pos} hold true. In this limit case, we also obtain the convergence appearing in \eqref{d6pos}. Furthermore, the pointwise convergence allows us to obtain the strong convergence of $e^{H(u_n)}$ to $e^{H(u)}$ in any $1\le r<\infty$.

\begin{lemma}\label{strong convergence2}
({\sl Strong convergence of $\nabla u_n$} ).   Under the assumptions of Theorem \ref{mainlimit}, 
\begin{equation}\label{convgrad2}
\nabla u_n\rightarrow \nabla u\,, \quad \hbox{strongly in } (L^p(\Omega))^N\, 
 \end{equation}
 \begin{equation}\label{debole2}
\pmb{a}(x, u_n, \nabla u_n)\rightarrow \pmb{a}(x, u, \nabla u) \,,\quad\hbox{strongly in }L^{p'}(\Omega; \R^N)\,,
\end{equation}
\begin{equation}\label{strongb}
b(x,u_n, \nabla u_n)\to b(x,u,\nabla u)\qquad\hbox{strongly in }L^1(\Omega)\,.
\end{equation}
\end{lemma}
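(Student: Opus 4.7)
My plan is to follow the same three-step architecture as Lemma \ref{strong convergence}: first establish \eqref{convgrad2}, from which \eqref{debole2} and \eqref{strongb} will follow by Vitali's theorem combined with the growth conditions \eqref{crescitaa}--\eqref{crescitab}. For \eqref{convgrad2} I would apply the Cancellation Lemma \ref{canc}(1) with $v = u_n - u \in W^{1,p}_0(\Omega)$ and, after adding and subtracting $\pmb{a}(x,u_n,\nabla u)$ exactly as in \eqref{a2}, reduce to showing that each of the remainder terms $I_n^1$, $I_n^2$, $I_n^3$ tends to zero.

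\textbf{Replacing the $L^\infty$ bound by $L^r$/equiintegrability arguments.} The crucial difference with Lemma \ref{strong convergence} is that the uniform bound $e^{H(u_n)}\le e^{H(C_1)}$ is no longer available. In its stead, Lemma \ref{apriori1} and the Remark immediately following it supply three tools: $\{u_n\}$ is uniformly bounded in every $L^r(\Omega)$, $1\le r<\infty$; $\{e^{H(u_n)}\}$ is likewise uniformly bounded in every $L^r(\Omega)$; and the tail estimate \eqref{conv:03a} holds. Together with the pointwise convergence $u_n\to u$, Vitali's theorem yields $e^{H(u_n)}\to e^{H(u)}$ strongly in every $L^r(\Omega)$, and in turn $e^{H(u_n)}\pmb{a}(x,u_n,\nabla u)\to e^{H(u)}\pmb{a}(x,u,\nabla u)$ strongly in $L^{p'}(\Omega;\R^N)$ (the only delicate point being the summand $e^{H(u_n)}|\nabla u|^{p-1}$, which I would handle by truncating $|\nabla u|$ at level $M$ and using the absolute continuity of $\int|\nabla u|^p$ as $M\to\infty$). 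The argument for $I_n^1$ then proceeds word-for-word as in \eqref{I1n}. For $I_n^2$ I would reproduce the $\{u_n>\delta\}$-vs-$\{u_n\le\delta\}$ splitting of \eqref{I2n}, now closing each piece by equiintegrability rather than pointwise $L^\infty$ bounds. For $I_n^3$, H\"older's inequality with the triple $N/p$, $r$, $s$ satisfying $p/N+1/r+1/s=1$ suffices, since $e^{H(u_n)}$ is bounded in $L^r$ and $(u_n-u)\to 0$ in every $L^s$.

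\textbf{From the weighted inequality to pointwise gradient convergence.} The \emph{main obstacle}, absent from the bounded case, is the final step of removing the exponential weight from
\[\int_\Omega e^{\sign(u_n-u)H(u_n)}\bigl[\pmb{a}(x,u_n,\nabla u_n)-\pmb{a}(x,u_n,\nabla u)\bigr]\cdot\nabla(u_n-u)\,dx\to 0.\]
Without a uniform lower bound on the weight, the dividing-by-$e^{H(C_1)}$ step used in \eqref{afin} is unavailable. My plan is to proceed pointwise: the integrand is nonnegative by the strict monotonicity \eqref{monotonia}, so its $L^1$-convergence to zero yields, along a subsequence, a.e.\ convergence to zero; since $e^{\sign(u_n-u)H(u_n)}\ge e^{-|H(u_n)|}\to e^{-|H(u)|}>0$ a.e., this forces $[\pmb{a}(x,u_n,\nabla u_n)-\pmb{a}(x,u_n,\nabla u)]\cdot\nabla(u_n-u)\to 0$ a.e., and the classical Leray--Lions pointwise-convergence lemma \cite[Lemma 5]{BMP3} then delivers $\nabla u_n\to\nabla u$ a.e.

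\textbf{Strong convergences.} Upgrading a.e.\ to strong $L^p$ convergence of the gradients is the point where the tail estimate \eqref{conv:03a} becomes decisive: writing $|\nabla u_n|^p=|\nabla T_k(u_n)|^p+|\nabla G_k(u_n)|^p$, the tail $\int|\nabla G_k(u_n)|^p$ is uniformly small for $k$ large, while on $\{u_n\le k\}$ the family $|\nabla u_n|^p$ is equiintegrable by the standard argument of the bounded regime, so Vitali's theorem yields \eqref{convgrad2}. Then \eqref{debole2} follows from Vitali applied to $|\pmb{a}(x,u_n,\nabla u_n)|^{p'}$ via \eqref{crescitaa}, and \eqref{strongb} from Vitali applied to $h(u_n)|\nabla u_n|^p$: the continuity of $h$ and the pointwise convergence of $u_n$ give pointwise convergence of $b(x,u_n,\nabla u_n)$, the bulk $\{u_n\le k\}$ is controlled by the continuity of $h$ on compact sublevels combined with the strong $L^p$-convergence of the gradients, and the tail $\{u_n>k\}$ is again controlled by \eqref{conv:03a}.
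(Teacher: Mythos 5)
Your overall architecture (reduce to the Leray--Lions condition of \cite[Lemma 5]{BMP3}, then Vitali for \eqref{debole2} and \eqref{strongb}) is the right one, but two of your key steps do not close, and both are exactly the points where the paper departs from the bounded case. First, in your Step 1 you keep the test function $v=u_n-u$ and must then handle $I_n^1=\int_\Omega e^{\sign(u_n-u)H(u_n)}\pmb a(x,u_n,\nabla u)\cdot\nabla(u_n-u)\,dx$ with only $L^r$-bounds on the weight. The claimed strong $L^{p'}$ convergence of $e^{H(u_n)}\pmb a(x,u_n,\nabla u)$ is not justified: the problematic summand is $e^{H(u_n)}|\nabla u|^{p-1}$, and your truncation of $|\nabla u|$ at level $M$ leaves the remainder $\int_{\{|\nabla u|>M\}}e^{p'H(u_n)}|\nabla u|^{p}\,dx$, which cannot be made small uniformly in $n$: $|\nabla u|^{p}$ is only in $L^1$, so H\"older against $\|e^{H(u_n)}\|_{L^r}$ is unavailable, and there is no fixed integrable majorant of $e^{p'H(u_n)}$, which may concentrate precisely where $|\nabla u|$ is large. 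In fact no uniform $L^{p'}$ bound for this family follows from the a priori estimates at all. The paper circumvents this by never pairing the exponential weight with the full difference: it writes $u_n-u=T_k(u_n-T_h(u))+G_k(u_n-T_h(u))+(T_h(u)-u)$ as in \eqref{conv:02a}, applies the cancellation lemma only with $v=T_k(u_n-T_h(u))$, whose gradient lives in $\{u_n\le k+h\}$ where the weight is bounded (so the bounded-case argument applies verbatim with $C_1$ replaced by $k+h$), and estimates the two remaining, weight-free terms by H\"older, using the smallness of $\int_{\{u>h\}}|\nabla u|^p$ and the tail estimate \eqref{conv:03a}. Your later ``weight removal'' and the upgrade from a.e.\ to strong $L^p$ convergence are also only sketched (the asserted equiintegrability of $|\nabla u_n|^p$ on $\{u_n\le k\}$ is not ``the standard argument of the bounded regime''), but these could be repaired, e.g.\ by recovering the unweighted condition \eqref{conv:02} on $\{u_n\le k\}$ and using \eqref{conv:03a} on $\{u_n>k\}$; the $I_n^1$ issue is the genuine obstruction to your route.

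Second, your proof of \eqref{strongb} claims that the tail $\int_{\{u_n>k\}}h(u_n)|\nabla u_n|^p\,dx$ is ``again controlled by \eqref{conv:03a}''. It is not: \eqref{conv:03a} controls $\int_{\{u_n>k\}}|\nabla u_n|^p$, but $h(u_n)$ is in general unbounded on $\{u_n>k\}$ (no $L^\infty$ estimate and no bound on $h$ are available; only the growth of $e^{H}$ relative to $\Phi$ is restricted by \eqref{ipo1}). This is precisely why the paper introduces the test function $\Psi(s)=\int_0^s h(\sigma)\chi_{\{\sigma>k\}}\,d\sigma$ in Lemma \ref{canc}(2), leading to \eqref{crucial-des} and hence to the uniform tail estimate \eqref{conv:05}, $\lim_{k\to\infty}\sup_n\int_{\{u_n>k\}}h(u_n)|\nabla u_n|^p\,dx=0$, which uses the $L^r$-boundedness of $e^{H(u_n)}H(u_n)$ against $f\in L^{N/p}$; only with \eqref{conv:05} does your bulk/tail splitting give the equiintegrability of $\{h(u_n)|\nabla u_n|^p\}_n$ needed for Vitali. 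Your handling of $I_n^2$ and $I_n^3$ by H\"older with the $L^r$-bounds is fine, but as it stands the proposal has these two genuine gaps.
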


\begin{proof} We proceed to check all the conditions by dividing the proof in several steps.

\noindent {\sl Step 1. Strong convergence of the gradients.} 
We explicitly point out that we cannot apply the same proof of the previous case because now we do not have an $L^\infty$--bound for $\{u_n\}_n$.

\noindent As in the previous case, in order to prove \eqref{convgrad2}, we are proving that (recall  \cite[Lemma 5]{BMP3})
\begin{equation}\label{conv:02}
  \lim_{n\to+\infty}\int_\Omega
 [\pmb{a}(x,u_n,\nabla u_n)-\pmb{a}(x,u_n,\nabla u)]\cdot \nabla (u_n-u)\, dx=0\,
\end{equation}
holds true.
To this aim we write
$$
\nabla (u_n-u)= \nabla T_k(u_n-T_h(u))+\nabla G_k(u_n-T_h(u))+\nabla (T_h(u)-u)\,,
$$
for certain $k$ and $h$, with $k>h$, to be chosen.
Hence
\begin{multline}\label{conv:02a}
\int_\Omega
 [\pmb{a}(x,u_n,\nabla u_n)-\pmb{a}(x,u_n,\nabla u)]\cdot \nabla (u_n-u)\, dx=\\
\int_\Omega
 [\pmb{a}(x,u_n,\nabla u_n)-\pmb{a}(x,u_n,\nabla u)]\cdot \nabla T_k(u_n-T_h(u))\, dx \\
 +
 \int_\Omega
 [\pmb{a}(x,u_n,\nabla u_n)-\pmb{a}(x,u_n,\nabla u)]\cdot   \nabla (T_h(u)-u)\, dx\\
 +
 \int_\Omega
 [\pmb{a}(x,u_n,\nabla u_n)-\pmb{a}(x,u_n,\nabla u)]\cdot   \nabla G_k(u_n-T_h(u))\, dx
 \,
\end{multline}
Let us begin by proving the following equality
\begin{equation}\label{conv:01}
  \lim_{n\to+\infty}\int_\Omega
 [\pmb{a}(x,u_n,\nabla u_n)-\pmb{a}(x,u_n,\nabla u)]\cdot \nabla T_k(u_n-T_h(u))\, dx=0
\end{equation}
We may take $v=  T_k(u_n-T_h(u))    \in W^{1,p}_0(\Omega)$ in Lemma \ref{canc} (1). Then we proceed as in the previous case. Actually we integrate over $\{|u_n|\le k+h\}$ and we may argue as above replacing $C_1$ with $k+h$. This yields \eqref{conv:01}.

Let us evaluate the second integral on the right-hand side of \eqref{conv:02a}. Fix $\epsilon>0$. Taking into account that the sequence $\{u_n\}_n$ is bounded in $W_0^{1,p}(\Omega)$, it follows from condition \eqref{crescitaa} that $\{\pmb{a}(x, u_n, \nabla u_n)\}_n$ is bounded in $L^{p'}(\Omega)^N$. Hence, there exists $h>0$ satisfying
\begin{multline}\label{conv:03}
\int_\Omega
 |[\pmb{a}(x,u_n,\nabla u_n)-\pmb{a}(x,u_n,\nabla u)]\cdot \nabla (T_h(u)-u)|\, dx\\
 \le\Big(\|\pmb{a}(x,u_n,\nabla u_n)\|_{p'}+\|\pmb{a}(x,u_n,\nabla u)\|_{p'}\Big)\left(\int_{\{u>h\}}|\nabla u|^p\right)^{1/p}
 <\epsilon/3\quad\forall n\in\N\,.
\end{multline}
Having fixed $h$, we  determine $k$. To this end,  notice that

\begin{multline}
\int_\Omega |[\pmb{a}(x,u_n,\nabla u_n)-\pmb{a}(x,u_n,\nabla u)]\cdot \nabla G_k(u_n-T_h(u))|\, dx\\
=\int_{\{|u_n-T_h(u)|>k\}} |[\pmb{a}(x,u_n,\nabla u_n)-\pmb{a}(x,u_n,\nabla u)]\cdot \nabla G_k(u_n-T_h(u))|\, dx\\
=\int_{\{u_n>k-h\}} |[\pmb{a}(x,u_n,\nabla u_n)-\pmb{a}(x,u_n,\nabla u)]\cdot \nabla G_k(u_n-T_h(u))|\, dx\,.
\end{multline}
Therefore by growth condition \eqref{crescitaa} and a priori estimates \eqref{h02}, we have
\begin{multline*}
  \int_\Omega |[\pmb{a}(x,u_n,\nabla u_n)-\pmb{a}(x,u_n,\nabla u)]\cdot \nabla G_k(u_n-T_h(u))|\, dx \\
  \le \Big(\|\pmb{a}(x,u_n,\nabla u_n)\|_{p'}+\|\pmb{a}(x,u_n,\nabla u)\|_{p'}\Big )\left[\left(\int_{\{u_n>k-h\}}|\nabla u_n|^p\right)^{1/p}+\left(\int_{\{u_n>k-h\}}|\nabla u|^p\right)^{1/p}\right]\\
  \le C\left[\left(\int_{\{u_n>k-h\}}|\nabla u_n|^p\right)^{1/p}+\left(\int_{\{u_n>k-h\}}|\nabla u|^p\right)^{1/p}\right]
\end{multline*}
Taking into  account \eqref{conv:03a}, we may find $k$ such that
\begin{equation}\label{conv:03bis}\int_\Omega |[\pmb{a}(x,u_n,\nabla u_n)-\pmb{a}(x,u_n,\nabla u)]\cdot \nabla G_k(u_n-T_h(u))|\, dx<\epsilon/3\quad\forall n\in\N.
\end{equation}
Combining \eqref{conv:02a}, \eqref{conv:01}, \eqref{conv:03} and \eqref{conv:03bis}, we conclude that \eqref{conv:02} holds.
\medskip

\noindent {\sl Step 2. Strong convergence of  $\pmb{a}(x, u_n, \nabla u_n)$ } A straightforward conseguence of \eqref{convgrad2} is that, up to subsequences,
 \begin{equation}\label{A5}
    \nabla u_n\rightarrow \nabla u \,,\qquad\hbox{a.e. in }\Omega\,.
  \end{equation}
Moreover, we also infer
\[b(x, u_n, \nabla u_n)\rightarrow b(x, u, \nabla u) \,, \quad \hbox{a.e. in } \Omega\]
and
\[\pmb{a}(x, u_n, \nabla u_n)\rightarrow \pmb{a}(x, u, \nabla u) \,, \quad \hbox{a.e. in } \Omega\,.\]
Now, as in the proof of Step 2 of Lemma \ref{strong convergence}, it follows from \eqref{crescitaa} and \eqref{convgrad2} that
\begin{equation}\label{debole2}
\pmb{a}(x, u_n, \nabla u_n)\rightarrow \pmb{a}(x, u, \nabla u) \,,\quad\hbox{strongly in }L^{p'}(\Omega; \R^N)\,.
\end{equation}

\noindent {\sl Step 3. Strong convergence of $b(x,u_n,\nabla u_n)$ to $b(x,u,\nabla u)$}
Now we prove \eqref{strongb}. Since no $L^\infty$--estimate is available, we are not able to prove that $\{h(u_n)\}_n$ is bounded, so that the   proof given in the previous section is not possible.

\noindent Consider the function
\[\Xi(s)=\int_0^sh(\sigma)\chi_{\{\sigma>k\}}d\sigma\qquad (k\ge1)\]
and note that $\Xi(s)\le H(s)\chi_{\{s>k\}}$ holds. Taking $\Psi(s)=\Xi(s)$ in Lemma \ref{canc} (2), we obtain
\begin{multline}\label{crucial-des}
  \lambda \int_{\{u_n>k\}}e^{H(u_n)}h(u_n)|\nabla u_n|^pdx\\
  \le \int_\Omega e^{H(u_n)}\Xi(u_n) g(x,u_n)\, dx+\int_\Omega e^{H(u_n)}\Xi(u_n) f\, dx\\
  \le \Lambda \int_{\{u_n>k\}} e^{H(u_n)}H(u_n) k^{q-1}\, dx+\int_{\{u_n>k\}}e^{H(u_n)}H(u_n) f\, dx
\end{multline}
We note that $k^{q-1}\le1$ and use the fact  that $\{e^{H(u_n)}\}_n$, and hence $\{H(u_n)\}_n$, is bounded in any $L^r(\Omega)$, $1\le r<\infty$, to get 
\[\int_{\{u_n>k\}}e^{H(u_n)}h(u_n)|\nabla u_n|^pdx\le  C\|(1+f)\chi_{\{u_n>k\}}\|_{L^m(\Omega)},\]
so that the right hand side tends to 0 uniformly on $n$.
 Therefore,
\begin{equation*}
  \lim_{k\to\infty}\sup_{n\in\N}\int_{\{u_n>k\}} e^{H(u_n)}h(u_n)|\nabla u_n|^p\, dx=0
\end{equation*}
 and, since $e^{H(u_n)}\ge 1$,
\begin{equation}\label{conv:05}
  \lim_{k\to\infty}\sup_{n\in\N}\int_{\{u_n>k\}} h(u_n)|\nabla u_n|^p\, dx=0
\end{equation}
The main consequence is that the sequence $\{h(u_n)|\nabla u_n|^p\}_n$ is equiintegrable. Indeed, if $E\subset \Omega$, then
\begin{multline}\label{inec:03}
  \int_E h(u_n)|\nabla u_n|^pdx\\
  = \int_{E\cap \{u_n\le k\}} h(u_n)|\nabla u_n|^pdx+\int_{E\cap \{u_n>k\}} h(u_n)|\nabla u_n|^pdx\\
  \le \big[\max_{s\in [0,k]}h(s)\big] \int_E |\nabla u_n|^pdx+\int_{\{u_n>k\}} h(u_n)|\nabla u_n|^pdx
\end{multline}
Now let $\epsilon>0$.
Keeping in mind \eqref{conv:05} and choosing $k$ large enough, we may obtain that
$$\int_{\{u_n>k\}} h(u_n)|\nabla u_n|^pdx<\epsilon/2$$
for all $n\in\N$. Fixed $k$, we may use the strong convergence of gradients to deduce that there exists $\delta>0$ such that, for any set $E$ having $|E|<\delta$,
$$\big[\max_{s\in [0,k]}h(s)\big] \int_E |\nabla u_n|^pdx<\epsilon/2$$ for all $n\in\N$. Therefore by \eqref{inec:03},
$|E|<\delta$ implies $\int_E h(u_n)|\nabla u_n|^pdx<\epsilon$ for all $n\in\N$, which provides the equiintegrability of the sequence $\{h(u_n)|\nabla u_n|^p\}_n$. Then by \eqref{crescitab} we conclude that the sequence $\{b(x, u_n, \nabla u_n)\}_n$ is equiintegrable.
Applying Vitali's Theorem, \eqref{strongb} follows. 
\end{proof}

\subsection{Existence: proof of Theorem \ref{mainlimit}}\label{ex_pos2} We prove that the function $u$  is a weak solution to problem \eqref{P} according to Definition \ref{defsol}. The proof of Theorem \ref{mainlimit} prooceds exactly as the proof of Theorem \ref{main} in subsection \ref{ex_pos}. We explicitely remark that the strong convergence in \eqref{strongb} obviously implies \eqref{b}. \qed

\medskip

\section{Existence result for $\frac{Np}{Np-N+p}\le m<\frac Np$}

The main result of the section, concerning existence of nonnegative weak solutions to problem \eqref{P} when the datum $f$ is  an element of the Lebesgue space $L^m(\Omega)$ with  $\frac{Np}{Np-N+p}\le m<\frac Np$, is stated as follows:
\begin{theorem}\label{main_unbounded}
Assume \eqref{ell} - \eqref{ipf_pos} with
$$
f\in L^m(\Omega)\,,\qquad \frac{Np}{N(p-1)+p}\le m< \frac Np\,.
$$
Moreover assume that there exist a constant $0<\theta<\frac{p^*}{pm'}$ and constants $0<M_1\le M_2$  satisfying
\begin{equation}\label{ipo2}
M_1\le \frac{e^{H(s)}}{(1+\Phi(s))^{(p-1)\theta}}\le M_2\qquad \forall s\ge0\,.
\end{equation}
Then problem \eqref{P}  has at least a weak solution such that
$u\in W_0^{1,p}(\Omega)\cap L^\frac{Nm(p-1)}{N-pm}(\Omega)$.
\end{theorem}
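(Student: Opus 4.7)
The plan is to mirror the three--step strategy of Theorems \ref{main} and \ref{mainlimit}: (i) derive uniform a priori bounds on the approximating sequence $\{u_n\}$ from \eqref{pd1}, (ii) upgrade weak convergence to strong convergence of $\nabla u_n$ in $L^p(\Omega)$, and (iii) pass to the limit in the approximate equations, closing the argument via Stampacchia's theorem. The target regularity is $u\in W^{1,p}_0(\Omega)\cap L^{\sigma}(\Omega)$ with $\sigma=\frac{Nm(p-1)}{N-pm}$, the Boccardo--Gallou\"et scale for $p$--Laplacian equations with $L^m$--data; notice that $\sigma$ collapses to the Sobolev exponent $p^*$ exactly when $m$ equals the left endpoint $\frac{Np}{N(p-1)+p}$.

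For the a priori estimate, test Lemma \ref{canc}(2) with a power--type function, e.g.\ $\Psi(s)=(1+\Phi(s))^{\beta}-1$ with $\beta\ge 1$ to be chosen. Using $|\nabla\Phi(u_n)|=e^{H(u_n)/(p-1)}|\nabla u_n|$, the left--hand side becomes, up to constants,
\begin{equation*}
\int_\Omega\bigl|\nabla(1+\Phi(u_n))^{(\beta+p-1)/p}\bigr|^p\,dx,
\end{equation*}
which Sobolev's inequality controls from below. On the right--hand side, the two--sided bound \eqref{ipo2} replaces $e^{H(u_n)}$ by a multiple of $(1+\Phi(u_n))^{(p-1)\theta}$; the singular factor $g(x,u_n)\le\Lambda u_n^{q-1}$ is harmless because $\Psi(u_n)$ vanishes like $u_n$ as $u_n\to 0$; the $f$--term is treated via H\"older with exponents $m,m'$. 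The threshold $\theta<p^*/(pm')$ in \eqref{ipo2} is precisely what keeps the exponent of $(1+\Phi(u_n))$ on the right strictly below the Sobolev exponent on the left, enabling the standard Boccardo--Gallou\"et absorption and producing a uniform bound on $\Phi(u_n)$ in $L^\sigma(\Omega)$, hence on $u_n$. Taking $\beta=1$ also yields the uniform $W^{1,p}_0$--estimate.

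Since $\{u_n\}$ is no longer bounded in $L^\infty(\Omega)$, the strong convergence of the gradients cannot proceed as in Subsection \ref{s3dex1} and one resorts to the truncation argument of Subsection \ref{sec:convgrad}: decompose
\begin{equation*}
\nabla(u_n-u)=\nabla T_k(u_n-T_h(u))+\nabla G_k(u_n-T_h(u))+\nabla(T_h(u)-u),
\end{equation*}
test Lemma \ref{canc}(1) with $v=T_k(u_n-T_h(u))$ to treat the first term as in \eqref{conv:01}, control the third by fixing $h$ large, and the second via a tail bound $\sup_n\int_{\{u_n>k\}}|\nabla u_n|^p\to 0$ of the type \eqref{conv:03a}, itself obtained from the $L^\sigma$--bound through Lemma \ref{canc}(2) with $\Psi(s)=\Phi(s)^{\beta}$ for some $\beta>1$. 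The $L^1$--convergence of $b(x,u_n,\nabla u_n)$ follows by the cancellation argument of Section 4, namely testing with $\Psi(s)=\int_0^s h(\sigma)\chi_{\{\sigma>k\}}\,d\sigma$ and invoking \eqref{ipo2} to bound $e^{H(u_n)}H(u_n)$ in sufficiently high Lebesgue spaces.

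The main obstacle is the a priori estimate itself: the interplay between the exponential growth of $e^H$, the power $(1+\Phi)^\beta$, the singular $u^{q-1}$ and the low summability of $f$ must be tuned so that H\"older produces precisely the subcritical exponent Sobolev needs; the numerical constraint $\theta<p^*/(pm')$ is calibrated exactly for this balance. Once uniform estimates and strong convergence are in hand, the passage to the limit in \eqref{pd1} proceeds as in Step 3 of Subsection \ref{ex_pos}: plug $T_k(u_n)\varphi$ with $\varphi\in W^{1,p}(\Omega)\cap L^\infty(\Omega)$ nonnegative as test function, pass to the limit via the already established convergences, divide by $k$ and let $k\to 0$, and finally invoke Stampacchia's theorem to extend the identity from $\{u\neq 0\}$ to the whole $\Omega$, thereby verifying Definition \ref{defsol}.
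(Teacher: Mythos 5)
Your outline follows the same architecture as the paper (power-type test functions in Lemma \ref{canc}(2) for the a priori bounds, the truncation decomposition and Lemma \ref{canc}(1) for the strong convergence of gradients, the $\Xi$-test for the equiintegrability of $b$, and the limit passage with $T_k(u_n)\varphi$ plus Stampacchia), but there are genuine gaps exactly where the limited summability of $u_n$ — as opposed to Sections 3 and 4 — bites. With $\Psi(s)=(1+\Phi(s))^{\beta}-1$ the absorption forces $\beta=r-p+1$ with $r$ as in \eqref{V5} (positivity of $\beta$ is where $\theta<\frac{p^*}{pm'}$ enters), and what comes out is a bound of $(1+\Phi(u_n))$ in $L^{\frac{Nm(p-1)(1-\theta)}{N-pm}}$, \emph{not} of $\Phi(u_n)$ in $L^{\frac{Nm(p-1)}{N-pm}}$ as you state; to pass to $u_n\in L^{\frac{Nm(p-1)}{N-pm}}$ you must use the lower bound $M_1$ in \eqref{ipo2}, which gives $s\le C(1+\Phi(s))^{1-\theta}$. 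More seriously, your claim that ``taking $\beta=1$ also yields the uniform $W^{1,p}_0$-estimate'' fails on part of the admissible range: with $\beta=1$ the right-hand side contains $\int_\Omega(1+\Phi(u_n))^{1+(p-1)\theta}f\,dx$, and H\"older against the available estimate requires $(1+(p-1)\theta)m'\le\frac{Nm(p-1)(1-\theta)}{N-pm}$, i.e. $r\ge p$, which is false for instance at or near the endpoint $m=\frac{Np}{N(p-1)+p}$, where $m'=p^*$ and $r=p(1-\theta)<p$. The paper instead extracts \eqref{h03} from the weighted estimate \eqref{V7} combined with $\Phi'(s)\ge M_1^{1/(p-1)}(1+\Phi(s))^{\theta}$, which yields $\int_\Omega(1+\Phi(u_n))^{r-p(1-\theta)}|\nabla u_n|^p\,dx\le C$ together with $r-p(1-\theta)\ge0$; this last inequality is exactly where the hypothesis $m\ge\frac{Np}{N(p-1)+p}$ (equivalently $m'\le p^*$) is used — a hypothesis your sketch never invokes.

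The same obstruction undermines your route to the tail bound: obtaining $\sup_n\int_{\{u_n>k\}}|\nabla u_n|^p\to0$ from Lemma \ref{canc}(2) with $\Psi(s)=\Phi(s)^{\beta}$, $\beta>1$, in the manner of \eqref{conv:03a} does not transfer here, because $\Phi(u_n)$ is no longer bounded in every $L^r(\Omega)$; the $f$-term caps the admissible exponent at $\beta\le r-p+1$, which is $\le1$ whenever $r\le p$, and even using the lower bound of \eqref{ipo2} the decay one would get is of order $(1+\Phi(k))^{-(r-p(1-\theta))}$, which degenerates at the endpoint where $r=p(1-\theta)$. The paper circumvents this by choosing $\Psi(s)=(1+\Phi(G_k(s)))^{r-p}\Phi(G_k(s))$ with the \emph{same} $r$ of \eqref{V5}: the right-hand side is then supported on $\{u_n>k\}$ and bounded by $C\|(1+f)\chi_{\{u_n>k\}}\|_{L^m(\Omega)}$, which tends to $0$ uniformly in $n$ by the $L^{\frac{Nm(p-1)}{N-pm}}$ estimate and absolute continuity of the integral; this is how \eqref{conv:07} is proved, endpoint included. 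Once \eqref{h03} and \eqref{conv:07} are secured this way, the remaining steps of your outline (gradient convergence, equiintegrability of $b$ via the $L^{m'+\epsilon}$ bound on $e^{H(u_n)}$, and the limit passage) do match the paper's proof.
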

As in the previous cases, we consider problems \eqref{pd1}, which for any fixed $n$, has at least a nonnegative bounded weak solution $u_n\in W^{1,p}_0(\Omega)\cap L^\infty(\Omega)$ and we  prove a priori estimates for these approximate solutions  $u_n$.
\subsection{A priori estimates}\label{s3dex1}

In this subsection we prove
that the sequence of {\sl approximate solutions} $\{u_n\}_n$ satisfies a priori estimates in $L^\frac{Nm(p-1)}{N-pm}(\Omega)$ and in $W^{1,p}_0(\Omega)$.
 We point out that $\frac{Nm(p-1)}{N-pm}$ tends to $\infty$ as $m\to\frac Np$ and so there is no solution of continuity with the estimates of the previous section. Observe, in addition, that $m=\frac{Np}{Np-N+p}$ yields an estimate in $L^{p^*}(\Omega)$, as expected.

\begin{lemma}\label{apriori2}
({\sl Estimates in $L^\frac{Nm(p-1)}{N-pm}(\Omega)$ and $W^{1,p}_0(\Omega)$}).   For any fixed $n\in \N$, let $u_n\in W^{1,p}_0(\Omega)\cap L^\infty(\Omega)$ be a weak solution to problem \eqref{pd1}. Under the assumptions of Theorem \ref{main_unbounded}, the following estimates hold true:
\begin{equation}\label{infty-r}
\|u_n\|_{L^\frac{Nm(p-1)}{N-pm}(\Omega)}\le  C_5\,,
\end{equation}
\begin{equation}\label{h03}
\|\nabla u_n\|_{L^p(\Omega)}\le  C_6\,,
\end{equation}
where $ C_5$, $ C_6$ are positive constants which only depend on $|\Omega|$, $N$, $m$, $p$, $\|f\|_{L^m}$, $\lambda$,
but do  not depend on $n$.

Furthermore, it also holds
\begin{equation}\label{conv:07}
  \lim_{k\to\infty}\sup_{n\in\N}\int_\Omega|\nabla G_k(u_n)|^p\, dx=0\,.
\end{equation}
\end{lemma}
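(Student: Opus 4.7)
The plan adapts the Moser-type argument of Lemma \ref{apriori1} to the subcritical summability $\frac{Np}{Np-N+p}\le m<\frac{N}{p}$, the essential new ingredient being the replacement of the vanishing quantity $\eta(k)$ by the explicit pointwise bound \eqref{ipo2}. I apply Lemma \ref{canc}(2) with $\Psi(s)=\Phi(s)^\gamma$ for a suitable $\gamma\ge1$. Using the identity $e^{pH(u_n)/(p-1)}|\nabla u_n|^p=|\nabla\Phi(u_n)|^p$, the left-hand side reduces to $\lambda\gamma\int_\Omega\Phi(u_n)^{\gamma-1}|\nabla\Phi(u_n)|^p\,dx$, and Sobolev's inequality applied to $\Phi(u_n)^{(\gamma+p-1)/p}$ bounds this below by $C\bigl(\int_\Omega\Phi(u_n)^{r_1}\,dx\bigr)^{p/p^*}$ with $r_1=(\gamma+p-1)p^*/p$. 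On the right-hand side, the upper bound in \eqref{ipo2} yields $e^{H(u_n)}\Phi(u_n)^\gamma\le M_2(1+\Phi(u_n))^{\gamma+(p-1)\theta}$; H\"older with exponents $m,m'$ then bounds the datum contribution by $C\|f\|_m\bigl(\int_\Omega(1+\Phi(u_n))^{m'(\gamma+(p-1)\theta)}\,dx\bigr)^{1/m'}$, whereas the singular contribution is controlled by splitting $\Omega=\{u_n\le1\}\cup\{u_n>1\}$ and using $g(x,s)s^{1-q}\le\Lambda$ on the first piece together with an estimate analogous to the datum term on the second.

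I now choose $\gamma$ to make the exponents on both sides coincide, namely $m'(\gamma+(p-1)\theta)=r_1=(\gamma+p-1)p^*/p$. A short computation gives $r_1=\frac{Nm(p-1)(1-\theta)}{N-pm}$, and the assumptions $m\ge\frac{Np}{Np-N+p}$ together with $\theta<p^*/(pm')$ guarantee $\gamma\ge1$ in the interior of the range (the borderline case $m=\frac{Np}{Np-N+p}$ follows directly from the forthcoming $W^{1,p}_0$-estimate together with Sobolev embedding). With this choice the inequality reads $\bigl(\int_\Omega\Phi(u_n)^{r_1}\,dx\bigr)^{p/p^*}\le C+C\bigl(\int_\Omega\Phi(u_n)^{r_1}\,dx\bigr)^{1/m'}$; since $m<N/p$ is equivalent to $1/m'<p/p^*$, the second term can be absorbed and one obtains $\|\Phi(u_n)\|_{L^{r_1}(\Omega)}\le C$ uniformly in $n$.

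To pass from $\Phi$ to $u$ and recover the sharper integrability \eqref{infty-r}, I use the lower bound in \eqref{ipo2}: the relation $\Phi'(s)=e^{H(s)/(p-1)}\asymp(1+\Phi(s))^\theta$ integrates to $\Phi(s)\asymp s^{1/(1-\theta)}$ as $s\to+\infty$. Hence the $L^{r_1}$-bound for $\Phi(u_n)$ translates into an $L^{r_1/(1-\theta)}$-bound for $u_n$, and $r_1/(1-\theta)=\frac{Nm(p-1)}{N-pm}$ is precisely the exponent in \eqref{infty-r}. The gradient estimate \eqref{h03} follows from Lemma \ref{canc}(2) with $\Psi(s)=s$: the inequality $\lambda\int_\Omega|\nabla u_n|^p\,dx\le\int_\Omega e^{H(u_n)}u_n(g(x,u_n)+f)\,dx$ is closed via H\"older plus the just-proved $L^r$-bound on $u_n$ together with the polynomial control on $e^{H(u_n)}$ provided by \eqref{ipo2}. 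Estimate \eqref{conv:07} is obtained by repeating the Moser step with a slightly enlarged $\gamma>1$ and using the elementary bound $|\nabla G_k(u_n)|^p\le k^{1-\gamma}u_n^{\gamma-1}|\nabla u_n|^p$ on $\{u_n>k\}$.

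The main obstacle is the loss factor $1-\theta$ that the Moser iteration introduces at the level of $\Phi$: one cannot hope to obtain directly the exponent $\frac{Nm(p-1)}{N-pm}$ for $\Phi(u_n)$. It is precisely the two-sided character of \eqref{ipo2}, not merely its upper half, that repairs this gap by furnishing the equivalence $\Phi(s)\asymp s^{1/(1-\theta)}$ needed to convert the bound on $\Phi(u_n)$ into the sharp bound on $u_n$ stated in \eqref{infty-r}.
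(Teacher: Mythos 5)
Your exponent bookkeeping is the same as the paper's (the balanced power gives $\frac{Nm(p-1)(1-\theta)}{N-pm}$ for $\Phi(u_n)$ and the lower bound in \eqref{ipo2} converts this into \eqref{infty-r}), but the choice $\Psi(s)=\Phi(s)^\gamma$ creates a genuine gap: the balancing condition $m'\bigl(\gamma+(p-1)\theta\bigr)=(\gamma+p-1)\frac{p^*}{p}$ forces $\gamma=\frac{(p-1)\bigl(\frac{p^*}{p}-m'\theta\bigr)}{m'-\frac{p^*}{p}}$, and $\gamma\ge1$ is equivalent to $\theta\le\frac{p^*-m'}{(p-1)m'}$, which is \emph{strictly smaller} than the admitted threshold $\frac{p^*}{pm'}$ whenever $m'>\frac{p^*}{p}$, i.e.\ throughout the range $m<\frac Np$. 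Concretely, for $N=4$, $p=2$, $m=\frac32$ (so $m'=3$) the hypothesis allows any $\theta<\frac23$, while $\gamma=2-3\theta<1$ as soon as $\theta>\frac13$; at the borderline $m=\frac{Np}{Np-N+p}$ one even has $\gamma=1-p\theta<1$ for every admissible $\theta$. When $\gamma<1$ the function $\Phi(s)^\gamma$ is not locally Lipschitz at $s=0$, so Lemma \ref{canc}(2) does not apply, and your bound of the singular term on $\{u_n\le1\}$ degenerates to $u_n^{\gamma+q-1}$, which is unbounded if $\gamma<1-q$. The paper circumvents exactly this by testing with $\Psi(s)=(1+\Phi(s))^{r-p}\Phi(s)$, $r=\gamma+p-1>p-1$: same growth $\Phi^{r-p+1}$ at infinity (so the same balanced exponent), but Lipschitz and linear near $0$, so the singular term on $\{\Phi(u_n)\le k\}$ carries the harmless power $u_n^{q}$.

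The auxiliary steps inherit the problem. For \eqref{h03}, testing with $\Psi(s)=s$ and closing by H\"older against the $L^{\frac{Nm(p-1)}{N-pm}}$ bound and the polynomial control $e^{H(u_n)}\le M_2(1+\Phi(u_n))^{(p-1)\theta}$ requires $[1+(p-2)\theta]\,m'\le\frac{Nm(p-1)(1-\theta)}{N-pm}$, which fails for $\theta$ near $\frac{p^*}{pm'}$ (in the example above, for $\theta>\frac12$); the paper instead deduces \eqref{h03} from the weighted estimate \eqref{V7} together with the lower bound $\Phi'(s)\ge M_1^{1/(p-1)}(1+\Phi(s))^{\theta}$, and this is precisely where the hypothesis $m\ge\frac{Np}{Np-N+p}$ (equivalently $r-p(1-\theta)\ge0$) is used — a hypothesis your plan never invokes, and your treatment of the borderline case is circular ($L^{p^*}$ bound from the $W^{1,p}_0$ bound, which in your scheme is obtained from the $L^r$ bound). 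Finally, for \eqref{conv:07} you cannot ``slightly enlarge $\gamma$'': in this regime there is no vanishing factor $\eta(k)$, the absorption works only because $\frac1{m'}<\frac p{p^*}$ at the single balanced exponent, and any larger power puts the right-hand side beyond what Sobolev controls; the paper instead tests with $(1+\Phi(G_k(s)))^{r-p}\Phi(G_k(s))$ and lets $k\to\infty$ using \eqref{V6} and the absolute continuity of $\|(1+f)\chi_{\{u_n>k\}}\|_{L^m(\Omega)}$. Your argument is correct on the subrange $\theta\le\frac{p^*-m'}{(p-1)m'}$, but to cover the full statement you should adopt the regularized test functions and the weighted-gradient route of the paper.
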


\begin{proof}
For any $r>p-1$, consider
$\Psi(s)=(1+\Phi(s)^{r-p}\Phi(s)$
in Lemma  \ref{canc} (2). Since  $\Psi'(u_n)\ge \min \{r+1-p, 1\}(1+\Phi(u_n))^{r-p}\Phi'(u_n)$, we get
\begin{multline}\label{V0}
 \int_\Omega  (1+\Phi(u_n))^{r-p}|\nabla \Phi(u_n)|^p dx
  =C\int_\Omega  e^\frac{pH(u_n)}{p-1}(1+\Phi(u_n))^{r-p}|\nabla u_n|^pdx\\
  \le C\int_\Omega e^{H(u_n)} (1+\Phi(u_n))^{r-p}\Phi(u_n)g(x, u_n)\, dx\\
  +C\int_\Omega e^{H(u_n)} (1+\Phi(u_n))^{r-p}\Phi(u_n)f_n\, dx\,.
\end{multline}
This inequality and Sobolev's imbedding Theorem lead to
\begin{align}\label{inec:01a}
&\left(\int_\Omega [(1+\Phi(u_n))^{\frac{r}{p}}-1]^{p^*}dx\right)^{\frac{p}{p^*}}
\le C\int_\Omega (1+\Phi(u_n))^{r-p}|\nabla \Phi(u_n)|^pdx\\
    &\notag\\
&\le C\int_{\{\Phi(u_n)>k\}}e^{H(u_n)} (1+\Phi(u_n))^{r-p+1}g(x, u_n)\, dx\notag\\
    &\notag\\
&\qquad + C\int_{\{\Phi(u_n)\le k\}}e^{H(u_n)}(1+\Phi(u_n))^{r-p}\Phi(u_n)g(x, u_n)\, dx\notag\\
    &\notag\\
&\notag \qquad +
C\int_{\Omega}e^{H(u_n)} (1+\Phi(u_n))^{r-p+1}f\, dx\,,
\end{align}
where $k\ge 1$.
Now we evaluate  the  integral over $\{\Phi(u_n)>k\}$ in \eqref{inec:01a}. Since $k\ge1$, by assumption \eqref{ipo2},  we have
\begin{align}\label{V1}
  \int_{\{\Phi(u_n)>k\}}e^{H(u_n)} &(1+\Phi(u_n))^{r-p+1}g(x, u_n)\, dx\\
  &\notag\\
  &\le M_2 \Lambda \int_{\{\Phi(u_n)>k\}}(1+\Phi(u_n))^{r-(p-1)(1-\theta)}  u_n^{q-1}\, dx \notag\\
    &\notag\\
&\le \frac{M_2 \Lambda}{[\Phi^{-1}(k)]^{1-q}}\int_{\{\Phi(u_n)>k\}}(1+\Phi(u_n))^{r-(p-1)(1-\theta)}  \, dx\notag\\
    &\notag\\
&\notag\le \frac{M_2 \Lambda}{[\Phi^{-1}(k)]^{1-q}}|\Omega|^{1-\frac 1m}\left(\int_{\Omega}
(1+\Phi(u_n))^{[r-(p-1)(1-\theta)]{m'}}
\, dx\right)^{\frac 1{m'}}\,.
\end{align}
Next we analyze the second integral over $\{\Phi(u_n)\le k\}$ in \eqref{inec:01a}.
Taking into account \eqref{g_pos}, \eqref{fisuu} and \eqref{ipo2}, we get
\begin{align*}
e^{H(u_n)}(1+\Phi(u_n))^{r-p}\Phi(u_n)g(x, u_n)&\le
\Lambda e^{H(u_n)}(1+\Phi(u_n))^{r-p}u_n^{q}\left(\frac{\Phi(u_n)}{u_n}\right)\\[2mm]
&\le \Lambda e^{H(u_n)}(1+\Phi(u_n))^{r-p}u_n^{q} e^{\frac{H(u_n)}{p-1}}
\end{align*}
Consequently, we get
\begin {multline}\label{V2}
\int_{\{\Phi(u_n)\le k\}}e^{H(u_n)}(1+\Phi(u_n))^{r-p}\Phi(u_n)g(x, u_n)\, dx\\
\le
\Lambda |\Omega| e^{H(\Phi^{-1}(k))}(1+k)^{r-p} \Phi^{-1}(k)^{q} e^{\frac{H(\Phi^{-1}(k))}{p-1}}\,.
\end{multline}
Regarding the third term on the right hand side of \eqref{inec:01a}, we apply \eqref{ipo2} and the H\''older inequality to obtain
\begin{multline}\label{V3}
\int_{\Omega}e^{H(u_n)} (1+\Phi(u_n))^{r-p+1}f\, dx\le
M_2\int_{\Omega} (1+\Phi(u_n))^{r-(p-1)(1-\theta)}f\, dx
\\
\le \|f\|_{L^m(\Omega)} \left(\int_{\Omega}
(1+\Phi(u_n))^{[r-(p-1)(1-\theta)]{m'}}
\, dx\right)^{\frac 1{m'}}
\end{multline}
Owing to \eqref{V1}, \eqref{V2} and \eqref{V3}, inequality \eqref{inec:01a} becomes
\begin{multline}\label{V4}
\left(\int_\Omega [(1+\Phi(u_n))^{\frac{r}{p}}-1]^{p^*}dx\right)^{\frac{p}{p^*}}\\
\le C+C
\left(\int_{\Omega}
(1+\Phi(u_n))^{[r-(p-1)(1-\theta)]{m'}}
\, dx\right)^{\frac 1{m'}}
\end{multline}
Now it follows from $ m< \frac Np$ that $m'>\frac{p^*}{p}$. This fact allows us to choose $r$ such that
\begin{equation}\label{V5}
r=\frac{(p-1)(1-\theta)m'}
{m'-\frac{p^*}{p}}\,,
\end{equation}
so that
$$
[r-(p-1)(1-\theta)]{m'}=\frac{rp^*}{p}=\frac{Nm(p-1)(1-\theta)}{N-pm}\,.
$$
Therefore, we infer from \eqref{V4} that there exists a constant $C>0$ satisfying
\begin{equation}\label{V6}
\int_\Omega (1+\Phi(u_n))^{\frac{Nm(p-1)(1-\theta)}{N-pm}}dx
\le C
\end{equation}
and, going back to \eqref{V0}, that
\begin{equation}\label{V7}
 \int_\Omega  (1+\Phi(u_n))^{r-p}|\nabla \Phi(u_n)|^p dx
\le C
\end{equation}
for all $n\in\N$.

To go from these estimates on $\{\Phi(u_n)\}_n$ to estimates on $\{u_n\}_n$, we first apply \eqref{ipo2} to get
\begin{equation}\label{V8}
M_1^{\frac1{p-1}}\le \frac{\Phi'(s)}{(1+\Phi(s))^{\theta}}
\end{equation}
and so
\[
M_1^{\frac1{p-1}}s\le \frac1{1-\theta} (1+\Phi(s))^{1-\theta}
\]
holds for all $s\in\R$.
On the one hand, this last inequality, jointly with \eqref{V6}, gives an estimate of $\{u_n\}_n$ in $L^ \frac{Nm(p-1)}{N-pm}(\Omega)$, so that \eqref{infty-r} is proven.  On the other, \eqref{V8} and \eqref{V7} imply
\begin{multline*}
M_1^{\frac{p}{p-1}}\int_\Omega (1+\Phi(u_n))^{r-p(1-\theta)}|\nabla u_n|^pdx
\le \int_\Omega (1+\Phi(u_n))^{r-p} \Phi'(u_n)^{p}|\nabla u_n|^pdx
\\
=  \int_\Omega(1+\Phi(s))^{r-p}|\nabla \Phi(u_n)|^p dx\le C\,.
\end{multline*}
Since $\displaystyle r-p(1-\theta)=\frac{p^*-m'}{m'-\frac{p^*}p}(1-\theta)\ge0$,  the estimate \eqref{h03} follows.

\noindent It remains to check \eqref{conv:07}. Having already determined $r$ by \eqref{V5}, we now choose
$\Psi(s)=(1+\Phi(G_k(s))^{r-p}\Phi(G_k(s))$
in Lemma  \ref{canc} (2), with $k\ge1$, getting
\begin{align*}
 \int_\Omega  e^{H(u_n)}e^\frac{H(G_k(u_n))}{p-1} &(1+\Phi(G_k(u_n)))^{r-p}|\nabla G_k(u_n)|^pdx\\
  &\le \int_\Omega e^{H(u_n)} (1+\Phi(G_k(u_n)))^{r-p}\Phi(G_k(u_n))g(x, u_n)\, dx\\
  &\qquad +\int_\Omega e^{H(u_n)} (1+\Phi(G_k(u_n))^{r-p}\Phi(G_k(u_n))f_n\, dx\\
  &\le \int_{\{u_n>k\}}e^{H(u_n)} (1+\Phi(u_n))^{r-p+1}g(x, u_n)\, dx\\
  &\qquad + \int_{\{u_n>k\}}e^{H(u_n)} (1+\Phi(u_n))^{r-p+1}f\, dx\,.
\end{align*}
Arguing as above, we deduce that
\begin{multline*}
 \int_\Omega  |\nabla G_k(u_n)|^pdx
  \le C \int_{\{u_n>k\}} (k^{q-1}+f)e^{H(u_n)} (1+\Phi(u_n))^{r-p+1}\, dx\\
  \le C \|(1+f) \chi_{\{u_n>k\}}\|_{L^{m}(\Omega)}
  \left(\int_{\Omega}  (1+\Phi(u_n))^{[r-(p-1)(1-\theta)]{m'}}\, dx\right)^{\frac1{m'}}\\
  \le C \|(1+f) \chi_{\{u_n>k\}}\|_{L^{m}(\Omega)}\,.
\end{multline*}
due to $k^{q-1}\le 1$.
Since the right hand side tends to 0 as $k\to\infty$, condition \eqref{conv:07} follows.
\end{proof}

 \begin{remark}\rm
We point out that we have obtained \eqref{V6}, an estimate on $\{\Phi(u_n)\}_n$, which leads, thanks to \eqref{ipo2}, to an estimate on $\{e^{H(u_n)}\}_n$, namely:
\begin{equation}\label{V9}
\int_\Omega \left( e^{H(u_n)}\right)^{\frac{Nm(1-\theta)}{\theta(N-pm)}}dx\le C\,.
\end{equation}
\end{remark}

\medskip
\subsection{Existence: proof of Theorem \ref{main_unbounded}}\label{ex_pos2}
By Lemma \ref{apriori2}  there exists a nonnegative function $u\in W_0^{1,p}(\Omega)\cap L^\frac{Nm(p-1)}{N-pm}(\Omega)$  such that, up to subsequences, conditions \eqref{d3pos},  \eqref{d4pos} and \eqref{d5pos} hold true. Actually, and owing to \eqref{infty-r}, we have that the strong convergence in \eqref{d4pos} holds for every $1\le r<\frac{Nm(p-1)}{N-pm}$. Moreover, the pointwise convergence \eqref{d5pos}, \eqref{V9} and
\[m'<\frac{Nm(1-\theta)}{\theta(N-pm)}\]
imply that
\begin{equation}\label{V10}
 e^{H(u_n)}\to e^{H(u)}\quad \hbox{in }L^{m'+\epsilon}(\Omega)\,.
\end{equation}
for some $\epsilon>0$ small enough.

As in the previous case the proof of Theorem \ref{main_unbounded} needs
\begin{enumerate}
\item the strong convergence of $\nabla u_n$ to $\nabla u$ in $L^p(\Omega)^N$
\item the strong convergence of $b(x,u_n,\nabla u_n)$ to $b(x,u,\nabla u)$ in $L^1(\Omega)$
\end{enumerate}

\noindent As far as the strong convergence of $\nabla u_n$ concerns, the proof proceeds exactly as in Step 1 of the proof of Lemma \ref{strong convergence2}, bearing in mind that \eqref{conv:07} holds.

In an analogous way the proof of the strong convergence of $b(x,u_n,\nabla u_n)$ to $b(x,u,\nabla u)$ proceeds as in Step 3 of the proof of Lemma \ref{strong convergence2}. Just replace, on the right hand side of \eqref{crucial-des}, the fact that $\{e^{H(u_n)}\}_n$ is bounded in any $L^r(\Omega)$, $r<\infty$ with the fact that $\{e^{H(u_n)}\}_n$ is bounded in $L^{m'+\epsilon}(\Omega)$. Then it is enough to perform the following inequalities over the set $\{u_n>k\}$:
\[
e^{H(u_n)}H(u_n)\le \frac{2m'}\epsilon e^{(1+(\epsilon/2m'))H(u_n)}\le  \frac{2m'}\epsilon e^{(1+\epsilon/m')H(u_n)}\frac1{e^{\epsilon H(k)/m'}}
\]
wherewith
\[
e^{m'H(u_n)}H(u_n)^{m'}\chi_{\{u_n>k\}}\le  C e^{(m'+\epsilon)H(u_n)}\frac1{e^{\epsilon H(k)}}\chi_{\{u_n>k\}}\,.
\]

\noindent Finally the conclusion of the proof of Theorem \ref{main_unbounded} follows the argument given in Subsection \ref{ex_pos}.  \qed

\section*{Acknowledgements}
The authors are grateful to David Arcoya for useful discussions and suggestions. The authors would like to thank University of Campania  ``L. Vanvitelli", University of Naples Federico II and University of Valencia for supporting some visiting appointments and  their kind hospitality.

\section*{Declarations}

\noindent{\bf {Ethical Approval}} Not applicable.

\noindent {\bf{Funding}}  The research of A. Ferone was partially supported by  Italian MIUR through research project PRIN 2017 “Qualitative and quantitative aspects of nonlinear PDEs.”. The research of
A. Mercaldo was partially supported by  Italian MIUR through research project PRIN 2017 “Direct and inverse problems for partial differential equations: theoretical aspects and applications” and PRIN 2022 "Partial differential equations and related geometric-functional inequalities". The research of
S. Segura de Le\'on is partially supported by Grant PID2022-136589NB-I00 founded
by MCIN/AEI/10.13039/501100011033 as well as by Grant RED2022-134784-T founded
by MCIN/AEI/10.13039/501100011033.

\noindent{\bf{Availability of data and materials}} All data generated or analysed during this study are included in this article.


\begin{thebibliography}{BC}



\bibitem{AB} \textsc{D. Arcoya and L. Boccardo: } {\it  Multiplicity of solutions for a Dirichlet problem with a singular and a supercritical nonlinearities}, Differential Integral Equations, {\bf 26} (2013), no. 1--2, 119--128.


\bibitem{AMM} \textsc{D. Arcoya and L. Moreno-M\'erida: } {\it  Multiplicity of solutions for a Dirichlet problem with a strongly singular nonlinearity},  Nonlinear Anal., Theory Methods Appl., Ser. A, Theory Methods, {\bf 95} (2014), 281--291.


\bibitem{B} \textsc{L. Boccardo: } {\it  A Dirichlet problem with singular and supercritical nonlinearities}, 
Nonlinear Anal., Theory Methods Appl., Ser. A, Theory Methods, {\bf 75} (2012) 4436--4440.


\bibitem{BCD} \textsc{L. Boccardo and J.Casado-D\'{\i}az: }  {\it  Some properties of solutions of some semilinear elliptic singular problems and applications to the G-convergence},   Asymptot. Anal., {\bf 86} (2014), 1--15.

 \bibitem{BCroce}  \textsc{L. Boccardo and G. Croce: } {\it  The impact of a lower order term in a Dirichlet problem with a singular nonlinearity},   Port. Math., {\bf 76}  3-4 (2020), 407--415. 




\bibitem{BMP1} \textsc{L. Boccardo, F. Murat, J.P. Puel}, {\it Existence de solutions non born\'ees pour certaines \'equations quasi- lin\'eaires},   Port. Math.,   {\bf 41} (1982), 507--534.

\bibitem{BMP2} \textsc{L. Boccardo, F. Murat, J.P. Puel}, 
{\it R\'esultats d’existence pour certains probl\`emes elliptiques quasilin\'eaires},  
Ann. Sc. Norm. Super. Pisa, Cl. Sci., IV. Ser.,  {\bf 11} (1984), 213--235. 

\bibitem{BMP3} \textsc{L. Boccardo, F. Murat, J.P. Puel}, {\it Existence of bounded solutions for nonlinear  elliptic unilateral problem},   Ann. di Mat. Pura ed Appl.,   {\bf 152} (1988), 183--196.

\bibitem{BMP4} \textsc{L. Boccardo, F. Murat, J.P. Puel}, {\it $L^\infty$--estimate for some nonlinear elliptic partial differential equations and application to an existence result.},  
SIAM J. Math. Anal.,   {\bf 23}, no 2, (1992), 326--333. 

\bibitem{BO} \textsc{L. Boccardo and L. Orsina: }  {\it Semilinear elliptic equations with singular nonlinearities},  Calc. Var. Partial Differential Equations, {\bf 37} (2010), no 3--4, 363--380.

\bibitem{BST} \textsc{L. Boccardo, S. Segura de Le\'on and C. Trombetti: }  {\it Bounded and unbounded solutions for a class of quasi-linear elliptic problems with a quadratic gradient term},
 J. Math. Pures Appl.,  {\bf 80} (2001), no 9, 919--940.

\bibitem{BCT}
\textsc{B.Brandolini,  F. Chiacchio and C. Trombetti,}  {\it Symmetrization for singular semilinear elliptic equations},  Ann. Mat. Pura Appl., {(4) } {\bf 193} (2014), 389--404.


\bibitem{CDM} \textsc{J.Casado-D\'{\i}az and F.Murat: } {\it Semilinear problems with right-hand sides singular  at $u=0$ which change sign},
   Ann. Inst. H. Poincar\' e C Anal. Non Lin\' eaire, {\bf 38} no. 3,  (2021), 877--909. 
  
\bibitem{CD} \textsc{A. Canino and M.  Degiovanni: } {\it A variational approach to a class of singular semilinear elliptic equations}, J. Convex Anal., \textbf{11} (2004),  no. 1, 147--162.

\bibitem{CGS} \textsc{A. Canino, M. Grandinetti and B. Sciunzi: } {\it Symmetry of solutions of some semilinear elliptic equations with singular nonlinearities},   J. Differential Equations, \textbf{255} (2013),  no. 12, 4437--4447.

\bibitem{CES} \textsc{A. Canino, F. Esposito and B. Sciunzi: } {\it On the H\"opf boundary lemma for singular semilinear elliptic equations},  J. Differential Equations, \textbf{266} (2019),  no. 9, 5488--5499.

\bibitem{CP} \textsc{M.M. Coclite, G. Palmieri}: {\it On a singular nonlinear Dirichlet problem},  Comm. Partial Differential Equations  {\bf 14} (1989) 1315 -- 1327.

\bibitem{CRT}
\textsc{ M.G.Crandall, P.H.Rabinowitz and L. Tartar:}
{\it On a Dirichlet problem with a singular nonlinearity},
  Comm. Partial Differential Equations, {\bf 2} (1977),  193--222.


\bibitem{C} \textsc{G. Croce: } {\it An elliptic problem with two singularities},  Asymptot. Anal., \textbf{78} (2012),  1--10.



\bibitem{FMS1} \textsc{A. Ferone, A.Mercaldo, S.Segura de Le\'on: } {\it A singular elliptic equation and a related functional},  ESAIM, Control Optim. Calc. Var., {\bf 27}, Paper No. 39 (2021), 17 p. 

\bibitem{FM1} \textsc{V. Ferone, F.Murat: } {\it Quasilinear problems having quadratic growth in the gradient: An existence result when the source term is small},  `` \'Equations aux d\'eriv\'ees partielles et applications. Articles d\'edi\'es \`a Jacques-Louis Lions'', Gauthier-Villars: Paris. (1998), 497--515. 

\bibitem{FM2} \textsc{V. Ferone, F.Murat: } {\it Nonlinear problems having natural growth in the gradient: an existence result when the source terms are small}, Nonlinear Anal., Theory Methods Appl., Ser. A, Theory Methods, {\bf 42}, No. 7 (2000), 1309--1326. 

\bibitem{AGM1}
\textsc{D.Giachetti, P.J.Mart\'{\i}nez-Aparicio and F.Murat: } {\it On the definition of the solution to a semilinear elliptic problem with a strong singularity at $u=0$},   
Nonlinear Anal., Theory Methods Appl., Ser. A, Theory Methods, {\bf  177} (2018), part B, 491--523.

\bibitem{AGM2}
\textsc{D.Giachetti, P.J.Mart\'{\i}nez-Aparicio and F.Murat: }  {\it Definition, existence, stability and uniqueness of the solution to a semilinear elliptic problem with a strong singularity at $u=0$},  Ann. Sc. Norm. Super. Pisa Cl. Sci., {\bf 18} (2018), 1395--1442.

\bibitem{AGM3}
\textsc{D.Giachetti, P.J. Mart\'{\i}nez-Aparicio and F.Murat: }  {\it A semilinear elliptic equation with a mild singularity at $u=0$: existence and homogenization},  J. Math. Pures Appl., {\bf 107} (2017), 41--77.


\bibitem{GPS} \textsc{D. Giachetti, F. Petitta and S. Segura de Le\'on: } {\it Elliptic equations having a singular quadratic gradient term and a changing sign datum},   Comm. Pure Appl. Anal., {\bf 11} (2012), 1875--1895.



\bibitem{O} \textsc{F. Oliva: } {\it Existence and uniqueness of solutions to some singular equations with natural growth},    Ann. Mat. Pura Appl. (4) {\bf 200}  (2021), no. 1, 287--314.

\bibitem{OlP} \textsc{F. Oliva and F. Petitta: } {\it On singular elliptic equations with measure sources},    ESAIM Control Optim. Calc. Var., {\bf 22}  (2016), no. 1, 289--308.

\bibitem{OrP} \textsc{L. Orsina and F. Petitta: } {\it A Lazer-McKenna type problem with measures},    Differential Integral Equations, {\bf 29} (2016), no. 1--2, 19--36.

\bibitem{Porretta} \textsc{ A. Porretta:} {\it Nonlinear equations with natural growth terms and measure data}, in: 2002-Fez Conference on Partial Differential Equations,  {\it  Electron. J. Diff. Eqns. Conf.} {\bf 9}  (2002) 183--202.

\bibitem{PS} \textsc{A. Porretta and S. Segura de Le\'on: } {\it Nonlinear elliptic equations having a gradient term with natural growth},  J. Math. Pures Appl. (9), {\bf 85}  (2006), no. 3, 465--492.

\bibitem{Segura} \textsc{ S. Segura de León:} {\it Existence and uniqueness for $L^1$ data of some elliptic equations with natural growth},   Adv. Diff. Eq., {\bf 9}  (2003),  1377--1408.

\bibitem{S} \textsc{G. Stampacchia: } {\it Le probl\`eme de Dirichlet pour les \'equations elliptiques du second ordre
\`a coefficients discontinus},   Ann. Inst. Fourier, {\bf 15} (1965) 189--258.

\bibitem{Stuart} \textsc{C.A.. Stuart: } {\it Existence and approximation of solutions of non-linear elliptic equations},   Math. Z. {\bf  147} (1976), 53--63.


\end{thebibliography}
\end{document}